\let\@font@warningori\@font@warning
\newcommand\shutup{\def\@font@warning##1{}}
\newcommand\youcanspeak{\let\@font@warning\@font@warningori}
\providecommand{\norm}[1]{\lVert#1\rVert}
\providecommand{\setZ}{\mathbb{Z}}
\providecommand{\setR}{\mathbb{R}}
\newcommand {\Z}	  {\mathbb{Z}}
\newcommand {\R}	  {\mathbb{R}}
\newcommand{\vol}{\mathrm{vol}}
\renewcommand{\epsilon}{\varepsilon}
\renewcommand{\leq}{\leqslant}
\renewcommand{\geq}{\geqslant}
\theoremstyle{plain}
\newtheorem{theorem}{Theorem}
\newtheorem{lemma}[theorem]{Lemma}
\theoremstyle{definition}
\title{On sub-determinants and the diameter of  polyhedra\footnote{An
    extended abstract of this paper was presented at the 28-th annual
    ACM symposium on Computational Geometry (SOCG 12)} }
\author{
  Nicolas Bonifas\thanks{LIX, École Polytechnique, Palaiseau and IBM, Gentilly (France). \texttt{bonifas@lix.polytechnique.fr}}
  \and
  Marco Di Summa\thanks{Dipartimento di Matematica, Università di Padova (Italy). \texttt{disumma@math.unipd.it}}
  \and
  Friedrich Eisenbrand\thanks{Ecole Polytechnique Fédérale de Lausanne (Switzerland). \texttt{friedrich.eisenbrand@epfl.ch}}
  \and
  Nicolai Hähnle\thanks{University of Bonn (Germany). \texttt{haehnle@or.uni-bonn.de }}
  \and
  Martin Niemeier\thanks{Technische Universität Berlin (Germany). \texttt{martin.niemeier@tu-berlin.de}}
}
\date{\today }
\begin{document}
\maketitle

\begin{abstract}
  \noindent
  We derive a new upper bound on the diameter of a   polyhedron
  $P = \{x \in \setR^n \colon Ax\leq b\}$, where $A \in \setZ^{m×n}$.
  The bound is polynomial in $n$ and the largest
  absolute value of a sub-determinant of $A$, denoted by $\Delta$. More precisely, we show
  that the diameter of $P$ is bounded by $O\left(\Delta^2 n^4\log
    n\Delta\right)$.  If $P$ is bounded, then we show that the diameter of
  $P$ is at most
  $O\left(\Delta^2 n^{3.5}\log n\Delta\right)$.

  For the special case in which $A$ is a totally
  unimodular matrix, the bounds are  $O\left(n^4\log
    n\right)$ and $O\left(n^{3.5}\log n\right)$ respectively. This
  improves  over the previous best bound of
  $O(m^{16}n^3(\log mn)^3)$ due to Dyer and
  Frieze~\cite{MR1274170}. 
\end{abstract}


\section{Introduction}
\label{sec:bound}

One of the fundamental  open problems in
optimization and discrete geometry is the question whether the diameter of
a polyhedron  can be bounded by a
polynomial in the dimension and the number of its defining inequalities.
%
The  problem is readily explained:
A \emph{polyhedron} is a set of the form $P = \{x \in \setR^n \colon Ax\leq b\}$,
where $A \in \setR^{m×n}$ is a matrix and $b \in \setR^m$ is an
$m$-dimensional vector.
A \emph{vertex} of $P$ is a point $x^* \in P$ such that there
exist $n$ linearly independent rows of $A$ whose corresponding
inequalities of $Ax\leq b$ are satisfied by $x^*$ with equality.
Throughout this paper, we assume that the polyhedron $P$ is \emph{pointed}, i.e. it has vertices,
which is equivalent to saying that the matrix $A$ has full column-rank.
Two different
vertices $x^*$ and $y^*$ are \emph{neighbors} if they are the endpoints of an \emph{edge} of the polyhedron,
i.e. there exist $n-1$ linearly independent rows of $A$ whose corresponding inequalities of
$Ax\leq b$ are satisfied with equality both by $x^*$ and $y^*$.
In this
way, we obtain the undirected \emph{polyhedral graph} with edges being pairs of neighboring
vertices of $P$. This graph is connected. The \emph{diameter} of $P$ is the
smallest natural number that bounds the  length of a shortest path
between any pair of vertices in this graph.
The question is now as follows:
\begin{quote}
  Can the diameter of a polyhedron $P = \{x \in \setR^n \colon Ax\leq b\}$ be
  bounded by a polynomial in $m$ and $n$?
\end{quote}
The belief in a positive answer to this question is called
the \emph{polynomial Hirsch conjecture}.
Despite a lot of research effort during the last 50 years, the gap
between lower and upper bounds on the diameter remains huge.
While, when the dimension $n$ is fixed, the diameter can be bounded
by a linear function of $m$ \cite{MR0254735,MR0355826}, for the general case the best upper bound,
due to Kalai and Kleitman~\cite{MR1130448}, is $m^{1 + \log n}$. The best lower bound is of the form $(1+\varepsilon) \cdot m$ for
some $\varepsilon>0$ in fixed and sufficiently large dimension $n$. This is due
to a celebrated result of Santos~\cite{Santos10} who disproved the,
until then longstanding, original \textit{Hirsch conjecture} for polytopes. The Hirsch
conjecture stated that the diameter of a bounded polyhedron\footnote{A
counterexample to the same conjecture for unbounded polyhedra
was found in 1967 by Klee and Walkup~\cite{KleeWalkup}.}
is at most $m-n$.
Interestingly, this huge gap (polynomial versus
quasi-polynomial)  is also not closed in a very simple
combinatorial abstraction of polyhedral graphs~\cite{EHRR10}.
However, it was shown by Vershynin~\cite{MR2529774} that
every polyhedron can be perturbed by a small random amount so that
the expected diameter of the resulting polyhedron is bounded by
a polynomial in  $n$ and $\ln m$.
See Kim and Santos~\cite{MR2681516} for a recent survey.

In light of the importance and apparent difficulty  of the open
question above, many researchers have shown that it can be answered in
an affirmative way in some special cases. Naddef~\cite{Naddef89}
proved that the Hirsch conjecture holds true for
$0/1$-polytopes. Orlin~\cite{MR1486304} provided a
quadratic upper bound for flow-polytopes. Brightwell et
al.~\cite{MR2223631} showed that the diameter of the
transportation polytope is linear in $m$ and $n$,
and a similar result holds for the dual of a transportation
polytope~\cite{MR769400} and the axial $3$-way transportation polytope~\cite{MR2568801}.

The results on flow polytopes and classical transportation polytopes concern
polyhedra defined by \textit{totally unimodular matrices},
i.e., integer matrices whose sub-determinants are $0,± 1$. For
such polyhedra Dyer and Frieze~\cite{MR1274170} had previously shown
that the diameter is bounded by a polynomial in $n$ and $m$. Their
bound is $O(m^{16}n^3(\log mn)^3)$. Their result is also
algorithmic: they show that there exists a randomized
simplex-algorithm that solves  linear programs defined by totally unimodular matrices
in polynomial time.

Our main result is a generalization and considerable improvement of the diameter
bound of Dyer and Frieze.  We show that the diameter of a polyhedron
$P = \{x \in \setR^n \colon Ax\leq b\}$, with $A \in \setZ^{m×n}$ is bounded by
$O\left(\Delta^2 n^4\log n\Delta\right)$. Here, $\Delta$ denotes the largest
absolute value of a \textit{sub-determinant} of $A$.  If $P$ is bounded,
i.e., a \textit{polytope},
then we can show that the diameter of $P$ is at most $O\left(\Delta^2
  n^{3.5}\log n\Delta\right)$.  To compare our bound with the one of Dyer
and Frieze one has to set $\Delta$ above to one and obtains
$O\left(n^4\log n\right)$ and $O\left(n^{3.5}\log n\right)$
respectively.  Notice that our bound
is  independent of $m$, i.e., the number of rows of $A$.

\subsection*{The proof method}

Let $u$ and $v$ be two vertices of $P$.
We  estimate the maximum number of  iterations of two   breadth-first-search
explorations of the polyhedral graph, one initiated at $u$, the other
initiated at $v$, until a common vertex is discovered. The diameter of
$P$ is at most twice this number of iterations.
The main idea in the analysis is to reason about the normal cones of vertices of $P$
and to exploit a certain volume expansion property.


We can assume that $P = \{x \in \setR^n \colon Ax\leq b\}$ is
\textit{non-degenerate}, i.e., each vertex has exactly $n$ tight
inequalities. This can be achieved by slightly perturbing the right-hand side vector $b$: in
this way the diameter can only grow. Notice that the polyhedron is then also full-dimensional.
We denote the polyhedral graph of $P$ by $G_P = (V,E)$.
Let $v \in V$ now be a vertex of $P$.
The \textit{normal cone} $C_v$ of $v$ is the set of all vectors $c \in
\setR^n$ such that $v$ is an optimal solution of the linear program
$\max\{c^Tx\colon x \in \setR^n, \, Ax\leq b\}$. 
The normal cone $C_v$ of a vertex of $v$ is a full-dimensional simplicial
polyhedral cone.  Two vertices $v$ and $v'$ are adjacent if and only
if $C_v$ and $C_{v'}$ share a facet. No two distinct normal cones share an interior point.
Furthermore, if $P$ is a polytope, then the union of the
normal cones of vertices of $P$ is the complete space $\setR^n$.

We now define the \textit{volume} 
of a set $U\subseteq V$ of vertices as
the volume of the union of the normal cones of $U$ intersected with the
\textit{unit ball} $B_n = \{ x \in \setR^n \colon \|x\|_2\leq1\}$, i.e.,
\begin{displaymath}
  \vol(U) := \vol \left( \bigcup_{v \in U} C_v \cap B_n \right).
\end{displaymath}
Consider an iteration of breadth-first-search. Let $I\subseteq V$ be the set of
vertices that have been discovered so far. Breadth-first-search will next
discover the neighborhood of $I$, which we denote by $\mathcal{N}(I)$.

Together with the integrality of $A$, the bound $\Delta$ on the subdeterminants
guarantees that the angle between one facet of a normal cone $C_v$ and the opposite ray is not too small.
We combine this fact, which we formalize in Lemma~\ref{lemma:UpperBoundOnDockingSurface},
with an isoperimetric inequality to show that the volume of $\mathcal{N}(I)$ is large relative to the volume of $I$.

\begin{lemma}
\label{lemma:VolumeOfNeighborhood}
Let $P=\{x \in \setR^n \colon Ax\leq b\}$ be a polytope where all
sub-determinants of $A\in \setZ^{m×n}$ are bounded by $\Delta$ in absolute value
and let  $I\subseteq V$ be a set of vertices of
$G_P$ with  $\vol(I) \leq (1/2) \cdot \vol(B_n)$.
Then the volume of the neighborhood of $I$ is at least
$$\vol({\mathcal{N}(I)})\geq \sqrt{\frac{2}{\pi}}\frac{1}{\Delta^2 n^{2.5}}\cdot \vol(I).$$
\end{lemma}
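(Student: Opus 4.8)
\emph{Proof plan.} The plan is to pass to the unit sphere and combine three ingredients: a per-facet estimate, driven by the sub-determinant bound, showing that each normal cone belonging to $\mathcal N(I)$ is ``not too flat'' where it abuts $U:=\bigcup_{v\in I}C_v$; a summation identifying the facets that separate $I$ from $\mathcal N(I)$ with the topological boundary $\partial U$; and the spherical isoperimetric inequality, used to turn the surface measure of $\partial U$ back into a lower bound on $\vol(I)$. I will use throughout the cone identity $\vol(C\cap B_n)=\tfrac1n\,\sigma_{n-1}(C\cap S^{n-1})$, where $\sigma_k$ denotes $k$-dimensional measure on the sphere, together with its $(n-1)$-dimensional analogue $\vol_{n-1}(F\cap B_n)=\tfrac1{n-1}\,\sigma_{n-2}(F\cap S^{n-1})$ for an $(n-1)$-dimensional cone $F$ contained in a hyperplane through the origin.

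First I would record the relevant structure. A vertex $v$ whose tight rows of $A$ are $a_1,\dots,a_n$ has $C_v=\cone(a_1,\dots,a_n)$; a facet $F=\cone(a_1,\dots,a_{n-1})$ of $C_v$ with $v\in I$ lies on $\partial U$ precisely when the neighbour $v'$ of $v$ across $F$ satisfies $v'\notin I$, i.e.\ $v'\in\mathcal N(I)$, and, since the normal cones of a polytope tile $\setR^n$, every facet of $\partial U$ arises this way. Thus $\partial U\cap B_n$ is, up to a null set, the disjoint union of the pieces $F_j\cap B_n$ over the separating facets $F_j$. Now fix one such $F_j$, between $v_j\in I$ and $v_j'\in\mathcal N(I)$, let $a_n'$ be the extra generator of $C_{v_j'}$, and let $\theta_j$ be the angle between $a_n'$ and $H_j:=\op{span}(a_1,\dots,a_{n-1})$. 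Since $C_{v_j'}$ is convex and contains $0$, all of $F_j$, and $a_n'/\norm{a_n'}$, it contains the pyramid with base $F_j\cap B_n$ and apex $a_n'/\norm{a_n'}$; that pyramid also lies in $B_n$, so
\[
  \vol(C_{v_j'}\cap B_n)\ \ge\ \tfrac1n\,\sin\theta_j\cdot\vol_{n-1}(F_j\cap B_n).
\]
Moreover $\sin\theta_j=\abs{\det[a_1,\dots,a_{n-1},a_n']}\big/\big(\norm{a_1\wedge\dots\wedge a_{n-1}}\,\norm{a_n'}\big)$, where the numerator is a nonzero integer, hence at least $1$, while each factor in the denominator is at most $\sqrt n\,\Delta$ because the $(n-1)\times(n-1)$ minors and the entries of $A$ are sub-determinants; so $\sin\theta_j\ge 1/(n\Delta^2)$.

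Next I would sum over the separating facets. Each $v'\in\mathcal N(I)$ is the partner $v_j'$ of at most $n$ of them (a simplicial cone has $n$ facets), and the cones $C_{v'}$ with $v'\in\mathcal N(I)$ have pairwise disjoint interiors, so
\[
  \vol(\mathcal N(I))\ \ge\ \tfrac1n\sum_j\vol(C_{v_j'}\cap B_n)\ \ge\ \frac1{n^3\Delta^2}\sum_j\vol_{n-1}(F_j\cap B_n)\ =\ \frac1{n^3\Delta^2}\,\vol_{n-1}(\partial U\cap B_n).
\]
For the final step, set $\Omega:=U\cap S^{n-1}$, so that the hypothesis becomes $\sigma_{n-1}(\Omega)=n\,\vol(I)\le\tfrac12\sigma_{n-1}(S^{n-1})$ and $\vol_{n-1}(\partial U\cap B_n)=\tfrac1{n-1}\sigma_{n-2}(\partial\Omega)$. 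The spherical isoperimetric inequality says that among spherical regions of a given measure a cap minimises the perimeter, and a short computation shows that, over all caps of measure at most $\tfrac12\sigma_{n-1}(S^{n-1})$, the ratio perimeter/measure is smallest for the half-sphere, where it equals $1/W_{n-2}$ with $W_{n-2}:=\int_0^{\pi/2}\sin^{n-2}\varphi\,d\varphi$. Hence $\sigma_{n-2}(\partial\Omega)\ge n\,\vol(I)/W_{n-2}$, which combined with the previous display gives $\vol(\mathcal N(I))\ge\big(n^2(n-1)\Delta^2W_{n-2}\big)^{-1}\vol(I)$; the Wallis identities yield $W_{n-2}<\tfrac1{n-1}\sqrt{\pi n/2}$, and plugging this in produces exactly $\vol(\mathcal N(I))\ge\sqrt{2/\pi}\,(\Delta^2 n^{2.5})^{-1}\vol(I)$.

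The step I expect to be the main obstacle is the isoperimetric one --- specifically, realising that it must be carried out \emph{spherically}, for $\Omega\subseteq S^{n-1}$, rather than \emph{Euclideanly}, for $U\cap B_n$. The Euclidean isoperimetric inequality only gives a bound of order $n^3$; the extra factor $\sqrt n$ that is needed comes precisely from the spherical estimate together with the fact that, under $\vol(I)\le\tfrac12\vol(B_n)$, the extremal configuration is the half-sphere. A secondary subtlety is choosing the right object in the per-facet step --- the pyramid over $F_j\cap B_n$ --- so that the crude bounds $\norm{a_n'}\le\sqrt n\,\Delta$ and $\norm{a_1\wedge\dots\wedge a_{n-1}}\le\sqrt n\,\Delta$, together with integrality of the top-dimensional determinant, already suffice; and then doing the bookkeeping with the factors $\tfrac1n$, $\tfrac1{n-1}$ and the Wallis constant carefully enough to recover the stated constant $\sqrt{2/\pi}$.
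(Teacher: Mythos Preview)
Your proof is correct and follows essentially the same route as the paper: the paper packages the argument into a ``dockable surface'' $D(S)$ and proves the two bounds $D(S_v)/\vol(S_v)\le\Delta^2 n^3$ (your per-facet pyramid estimate plus the count of $n$ facets) and $D(S)/\vol(S)\ge\sqrt{2n/\pi}$ (L\'evy's spherical isoperimetric inequality reduced to the half-sphere), and combines them exactly as you do. The only cosmetic differences are that the paper organises the local estimate per cone rather than per separating facet, and evaluates the half-sphere ratio via Gamma-function inequalities rather than the Wallis integral $W_{n-2}$.
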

We provide the proof of this lemma in the next section.
Our diameter bound for polytopes is an easy consequence:
\begin{theorem}
\label{thr:1}
Let $P=\{x \in \setR^n \colon Ax\leq b\}$ be a polytope where all
subdeterminants of $A\in \setZ^{m×n}$ are bounded by $\Delta$ in absolute value.
The diameter of $P$ is bounded by $O\left(\Delta^2 n^{3.5}\log n\Delta\right)$.
\end{theorem}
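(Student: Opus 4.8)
The plan is to race two breadth-first-search (BFS) explorations of the polyhedral graph $G_P$, one rooted at $u$ and one rooted at $v$, and to bound the number of rounds until the two explorations meet. As observed before the lemma, we may assume $P$ is non-degenerate (perturbing $b$ keeps $P$ a polytope, does not change the sub-determinants of $A$, and can only increase the diameter). I would track, for the exploration rooted at $u$, the discovered vertex set $I_0=\{u\}$, $I_{t+1}=I_t\cup\cN(I_t)$, where $\cN(I_t)$ is the set of newly reached vertices. Since the interiors of the normal cones of distinct vertices are disjoint, $\vol$ is additive over disjoint vertex sets, so $\vol(I_{t+1})=\vol(I_t)+\vol(\cN(I_t))$, and whenever $\vol(I_t)\le\tfrac12\vol(B_n)$ Lemma~\ref{lemma:VolumeOfNeighborhood} gives
\[
\vol(I_{t+1})\;\ge\;(1+\epsilon)\,\vol(I_t),\qquad \epsilon:=\sqrt{\tfrac{2}{\pi}}\,\Delta^{-2}n^{-2.5}.
\]

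Thus $\vol(I_t)$ grows at least geometrically until it first exceeds $\tfrac12\vol(B_n)$, and using $\ln(1+\epsilon)\ge\epsilon/2$ (valid since $\epsilon\le 1$) this happens after at most
\[
k:=1+\frac{2}{\epsilon}\,\ln\!\frac{\vol(B_n)}{2\,\vol(I_0)}
\]
rounds; by monotonicity $\vol(I_k)>\tfrac12\vol(B_n)$, and the identical bound holds for the exploration rooted at $v$. To turn this into the claimed bound I need a lower bound on $\vol(I_0)=\vol(C_u\cap B_n)$. The cone $C_u$ is simplicial, generated by the $n$ linearly independent rows $a_{j_1},\dots,a_{j_n}$ of $A$ tight at $u$, so the simplex $\conv\{0,a_{j_1}/\|a_{j_1}\|,\dots,a_{j_n}/\|a_{j_n}\|\}$ lies inside $C_u\cap B_n$, whence
\[
\vol(C_u\cap B_n)\;\ge\;\frac{1}{n!}\cdot\frac{\bigl|\det(a_{j_1},\dots,a_{j_n})\bigr|}{\prod_i\|a_{j_i}\|}\;\ge\;\frac{1}{n!\,(\sqrt n\,\Delta)^{n}},
\]
because the determinant is a nonzero integer and every row of $A$ has Euclidean norm at most $\sqrt n\,\Delta$. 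Combining this with $\vol(B_n)\le 2^n$ and Stirling's formula yields $\ln\bigl(\vol(B_n)/(2\vol(I_0))\bigr)=O(n\log n\Delta)$, and since $2/\epsilon=O(\Delta^2 n^{2.5})$ we get $k=O(\Delta^2 n^{2.5})\cdot O(n\log n\Delta)=O(\Delta^2 n^{3.5}\log n\Delta)$.

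It remains to conclude that the two explorations meet within $k$ rounds. After $k$ rounds the sets $S_u:=\bigcup_{w\in I_k}C_w\cap B_n$ and its analogue $S_v$ for the other exploration are subsets of $B_n$, each of volume exceeding $\tfrac12\vol(B_n)$, so $\vol(S_u\cap S_v)\ge\vol(S_u)+\vol(S_v)-\vol(B_n)>0$. Hence some full-dimensional normal cone from the first family and some from the second overlap in positive measure, i.e.\ in their interiors; since distinct vertices have interior-disjoint normal cones, the two explorations must share a common vertex $w$. Then $\op{dist}(u,w)\le k$ and $\op{dist}(w,v)\le k$ in $G_P$, so $\op{dist}(u,v)\le 2k$, and as $u,v$ were arbitrary this bounds the diameter by $O(\Delta^2 n^{3.5}\log n\Delta)$.

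The substance of the argument is packed into Lemma~\ref{lemma:VolumeOfNeighborhood}, which I take as given. Beyond it, the only genuinely new ingredients are the single-cone lower bound $\vol(C_u\cap B_n)\ge (n!\,(\sqrt n\,\Delta)^n)^{-1}$ and the bookkeeping in the geometric-growth estimate. The step to watch — and the one that determines whether the exponent is $3.5$ rather than something larger — is precisely this bookkeeping: the ``mixing time'' $2/\epsilon$ of the volume recursion is $\Theta(\Delta^2 n^{2.5})$, and one must verify that the tiny initial volume contributes only an additional logarithmic factor $O(n\log n\Delta)$, not a polynomial one.
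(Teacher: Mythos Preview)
Your proof is correct and follows essentially the same approach as the paper: run BFS from both endpoints, use Lemma~\ref{lemma:VolumeOfNeighborhood} to get geometric volume growth with rate $1+\Theta(\Delta^{-2}n^{-2.5})$, lower-bound the initial cone volume by $(n!\,(\sqrt n\,\Delta)^n)^{-1}$ via the integral simplex on the tight rows, and conclude that each exploration exceeds half the ball volume after $O(\Delta^2 n^{3.5}\log n\Delta)$ rounds. Your explicit volume-pigeonhole argument that the two explorations must share a vertex, and your per-generator normalization of the simplex, are minor cosmetic variations on the paper's presentation.
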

\begin{proof}
  We estimate the maximum number of iterations of
  breadth-first-search until the total volume of the discovered
  vertices exceeds $(1/2) \cdot \vol(B_n)$. This is an upper bound on the
  aforementioned maximum number of iterations of two
  breadth-first-search explorations until a common vertex is
  discovered.

  Suppose we start at vertex $v$ and
  let $I_j$
  be the vertices that have been discovered during the first $j$
  iterations. We have   $I_o = \{v\}$. If $j\geq1$ and  $\vol(I_{j-1}) \leq (1/2) \cdot
  \vol(B_n)$ we have by Lemma~\ref{lemma:VolumeOfNeighborhood}
  \begin{eqnarray*}
    \vol(I_{j}) & \geq &  \left(1+ \sqrt{\frac{2}{\pi}}\frac{1}{\Delta^2 n^{2.5}}\right)
    \vol(I_{j-1}) \\
              & \geq & \left(1+ \sqrt{\frac{2}{\pi}}\frac{1}{\Delta^2 n^{2.5}}\right)^{j}
              \vol(I_0).
  \end{eqnarray*}
The condition $\vol(I_j) \leq (1/2) \cdot \vol(B_n)$ implies
  \begin{equation*}
    \left(1+ \frac{1}{ \sqrt{\frac{\pi}{2}} \Delta^2 n^{2.5}}\right)^{j}
    \vol(I_0) \leq 2^n.
  \end{equation*}
  This is equivalent to
  \begin{equation*}
    j \cdot \ln \left(1+ \frac{1}{ \sqrt{\frac{\pi}{2}} \Delta^2 n^{2.5}}\right)
    \leq \ln (2^n / \vol(I_0)).
  \end{equation*}
For $0 \leq x \leq 1$ one has $\ln (1 + x)\geq x/2$ and thus the inequality
above implies
\begin{equation}
\label{eq:3}
  j \leq   \sqrt{2 \pi} \Delta^2 n^{2.5}\cdot  \ln (2^n / \vol(I_0)).
\end{equation}
To finish the proof we need a lower bound on $\vol(I_0)$, i.e., the
$n$-dimensional volume of the set $C_v\cap B_n$. The normal cone $C_v$
contains the full-dimensional simplex spanned by $0$ and the
$n$ row-vectors $a_{i_1},\ldots,a_{i_n}$
 of $A$   that correspond to the inequalities of $Ax\leq b$ that are tight at
$v$. Since $A$ is integral, the volume of this simplex is at least
$1/n!$. Furthermore, if this simplex is scaled by $1 / \max\{
\|a_{i_k}\| \colon k=1,\ldots,n\}$,  then it is contained in the unit
ball. Since each component of $A$ is itself a sub-determinant, one has
$\max\{\|a_{i_k}\| \colon k=1,\ldots,n\} \leq \sqrt{n} \Delta$ and thus  $\vol(I_0)
\geq 1/ (n! \cdot n^{n/2} \Delta^n)$.
It follows that \eqref{eq:3} implies  $j = O\left(\Delta^2 n^{3.5}\log n\Delta\right)$.
\end{proof}

\textit{Remarks.}
The result of Dyer and Frieze~\cite{MR1274170} is also based on
analyzing expansion properties via isoperimetric inequalities. It is
our choice of normal cones as the natural geometric representation,
and the fact that we only ask for volume expansion instead of expansion of
the graph itself, that allows us to get a better bound.
Expansion properties of the graphs of general classes of
polytopes have also been studied elsewhere in the literature, e.g.~\cite{MR1116365, MR2077562}.

\subsection*{Organization of the paper}

The next section is devoted to a proof of the volume-ex­pan­sion
property, i.e., Lemma~\ref{lemma:VolumeOfNeighborhood}. The main tool
that is used here is a classical \textit{isoperimetric inequality} that
states that among measurable subsets of a sphere with fixed volume,
spherical caps have the smallest circumference.
Section~\ref{sec:unbounded} deals
with unbounded polyhedra. Compared to the case of polytopes, the
problem that arises here is the fact that the union of the normal cones is
not the complete space $\setR^n$. To tackle this case, we rely on an
isoperimetric inequality of Lovász and
Simonovits~\cite{lovasz1993random}. Finally, we discuss how our bound
can be further generalized. In fact, not all sub-determinants of $A$ need to
be at most $\Delta$ but merely the entries of $A$ and the
$(n-1)$-dimensional sub-determinants have to be bounded by $\Delta$, which
yields a slightly stronger result.

\section{Volume expansion}
\label{sec:VolumeExpansion}

This section is devoted to a proof of
Lemma~\ref{lemma:VolumeOfNeighborhood}. Throughout this section, we
assume that $P = \{ x \in \setR^n \colon Ax\leq b\}$ is a polytope. We begin with some useful
notation. A (not necessarily convex) \textit{cone} is a subset of
$\setR^n$ that is closed under the multiplication with non-negative
scalars. The intersection of a cone with the unit ball $B_n$ is called
a \textit{spherical cone}. Recall that $C_v$ denotes the normal cone of
the vertex $v$ of $P$.  We denote the spherical cone $C_v \cap B_n$ by
$S_v$ and, for a subset $U\subseteq V$, the spherical cone $\bigcup_{v \in U} S_v$ by
$S_U$. Our goal is to show that the following inequality holds for each
$I\subseteq V$ with $\vol(S_I) \leq \frac{1}{2} \vol(B_n)$:
\begin{equation}
  \label{eq:4}
  \vol(S_{\mathcal{N}(I)}) \geq \sqrt{\frac{2}{\pi}} \frac{1}{\Delta^2 n^{2.5}}
  \cdot \vol(S_I).
\end{equation}

Recall that two vertices are adjacent in $G_P$ if and only if their
normal cones have a common facet. This means that the neighbors of $I$ are those
vertices $u$ for which $S_u$ has a facet which is part of the surface
of    the
spherical cone $S_I$. In an iteration of breadth-first-search we thus
augment the set of discovered vertices $I$ by those vertices $u$ that
can ``dock'' on $S_I$ via a common facet. We call the $(n-1)$-dimensional
volume of the  surface  of a spherical cone $S$ that is  not on the sphere,
the \textit{dockable surface} $D(S)$, see Figure~\ref{fig:cone-regions}.

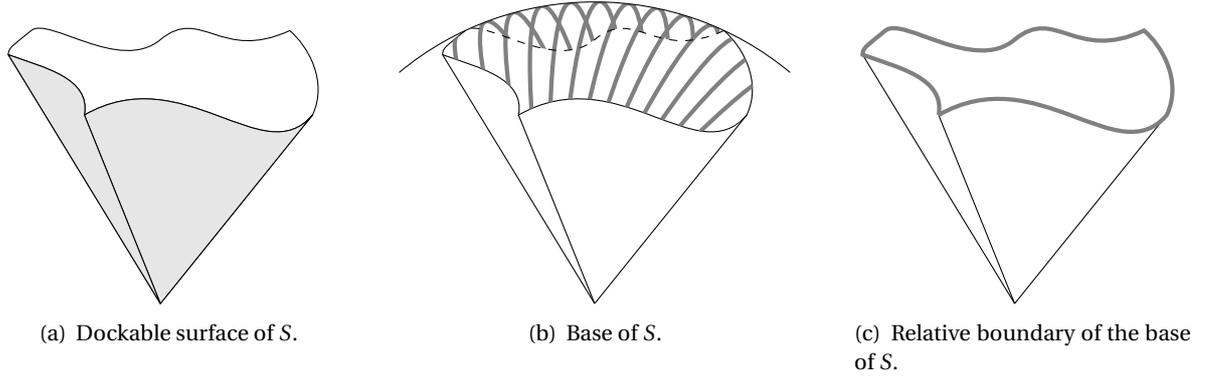
\begin{figure*}
\centering
  \subfigure[Dockable surface of $S$.]{
  \begin{tikzpicture}
    \coordinate (a) at (-1,2.5);
    \coordinate (b) at (2,2.5);
    \coordinate (c) at (1.7,3.62);
    \coordinate (d) at (0,3.5);
    \coordinate (e) at (-1.7,3.62);
    \coordinate (f) at (-2,3.3);
    \coordinate (o) at (0,0);
    \draw (o) -- (f);
    \draw (o) -- (a);
    \draw (o) -- (b);
    \draw (a) .. controls +(30:1.5) and +(230:1) .. (b) .. controls +(65:0.5) and +(320:0.3) .. (c) .. controls +(205:1) and +(40:0.7) .. (d) .. controls +(220:0.7) and +(40:0.3) .. (e) .. controls +(220:0.3) and +(90:0.1) .. (f) .. controls +(340:0.5) and +(80:0.5) .. (a);
    \draw[fill=gray!20,draw] (o) -- (a) .. controls +(30:1.5) and +(230:1) .. (b) -- cycle;
    \draw[fill=gray!20,draw] (o) -- (f) .. controls +(340:0.5) and +(80:0.5) .. (a) -- cycle;
  \end{tikzpicture}
  }
  \qquad
  \subfigure[Base of $S$.]{
  \begin{tikzpicture}
    \coordinate (a) at (-1,2.5);
    \coordinate (b) at (2,2.5);
    \coordinate (c) at (1.7,3.62);
    \coordinate (d) at (0,3.5);
    \coordinate (e) at (-1.7,3.62);
    \coordinate (f) at (-2,3.3);
    \coordinate (o) at (0,0);
    \draw (o) -- (f);
    \draw (o) -- (a);
    \draw (o) -- (b);
    \draw[draw] (e) .. controls +(220:0.3) and +(90:0.1) .. (f) .. controls +(340:0.5) and +(80:0.5) .. (a) .. controls +(30:1.5) and +(230:1) .. (b) .. controls +(65:0.5) and +(320:0.3) .. (c);
    \begin{scope}[yshift=114]
      \clip (c) .. controls +(205:1) and +(40:0.7) .. (d) .. controls +(220:0.7) and +(40:0.3) .. (e) arc (115.15:64.85:4);
      \foreach \k in {0,...,13} {
        \draw[ultra thick,gray,rotate around={\k*5.5-45:(0,-4)}] (0,0) arc (90:0:0.5 and 4);
      }
    \end{scope}
    \begin{scope}[yshift=114]
      \clip (e) .. controls +(220:0.3) and +(90:0.1) .. (f) .. controls +(340:0.5) and +(80:0.5) .. (a) .. controls +(30:1.5) and +(230:1) .. (b) .. controls +(65:0.5) and +(320:0.3) .. (c) arc (64.85:115.15:4);
      \foreach \k in {0,...,13} {
        \draw[ultra thick,gray,rotate around={\k*5.5-45:(0,-4)}] (0,0) arc (90:180:0.5 and 4);
      }
    \end{scope}
    \draw[dashed] (c) .. controls +(205:1) and +(40:0.7) .. (d) .. controls +(220:0.7) and +(40:0.3) .. (e);
    \draw (50:4) arc (50:130:4);
  \end{tikzpicture}
  }
  \qquad
  \subfigure[Relative boundary of the base of $S$.]{
  \begin{tikzpicture}
    \coordinate (a) at (-1,2.5);
    \coordinate (b) at (2,2.5);
    \coordinate (c) at (1.7,3.62);
    \coordinate (d) at (0,3.5);
    \coordinate (e) at (-1.7,3.62);
    \coordinate (f) at (-2,3.3);
    \coordinate (o) at (0,0);
    \draw (o) -- (f);
    \draw (o) -- (a);
    \draw (o) -- (b);
    \draw[gray,ultra thick] (a) .. controls +(30:1.5) and +(230:1) .. (b) .. controls +(65:0.5) and +(320:0.3) .. (c) .. controls +(205:1) and +(40:0.7) .. (d) .. controls +(220:0.7) and +(40:0.3) .. (e) .. controls +(220:0.3) and +(90:0.1) .. (f) .. controls +(340:0.5) and +(80:0.5) .. (a);
  \end{tikzpicture}
  }
  \caption{Illustration of $D(S)$, $B(S)$ and $L(S)$.}
  \label{fig:cone-regions}
 \end{figure*}
The  \textit{base} of $S$ is the  intersection of $S$ with
 the unit sphere. We denote the area  of the base by $B(S)$. By
 \textit{area} we mean the $(n-1)$-dimensional measure of some
 surface. Furthermore,
 $L(S)$ denotes the length of the relative boundary of the base
 of $S$. We use the term \textit{length} to denote the measure of an
 $(n-2)$-dimensional volume, see Figure~\ref{fig:cone-regions}.

Given any spherical cone $S$ in the unit ball, 
the following well-known relations follow from basic
integration:
\begin{equation}\label{eq:cones-formulas}
  \vol(S)=\frac{B(S)}n, \quad D(S)=\frac{L(S)}{n-1}.
\end{equation}

To obtain the volume expansion relation~\eqref{eq:4} we need to bound
the dockable surface of a spherical cone from below by its volume and,
for a simplicial spherical cone, we need an upper bound on the
dockable surface by its volume. More precisely, we show that
for every simplicial spherical cone $S_v$ one has
\begin{equation}
  \label{eq:5}
  \frac{D(S_v)}{\vol(S_v)} \leq \Delta^2 n^3
\end{equation}
and for any spherical cone 
one has
\begin{equation}
  \label{eq:6}
  \frac{D(S)}{\vol(S)} \geq \sqrt\frac{2n}{\pi}.
\end{equation}

Once inequalities~\eqref{eq:5} and \eqref{eq:6} are derived, the
bound~\eqref{eq:4} can be obtained as follows. All of the dockable
surface of $S_I$ must be ``consumed'' by the neighbors of
$I$. Using~\eqref{eq:6} one has thus
\begin{equation}
  \label{eq:7}
  \sum_{v \in \mathcal{N}(I)} D(S_v) \geq D(S_I) \geq \sqrt\frac{2n}{\pi}
  \cdot \vol(S_I).
\end{equation}
On the other hand, \eqref{eq:5} implies
\begin{equation}
  \label{eq:8}
  \sum_{v \in \mathcal{N}(I)}D(S_v) \leq  \Delta^2 n^3 \cdot \sum_{v \in
    \mathcal{N}(I)} \vol(S_v) = \Delta^2 n^3 \cdot \vol(S_{\mathcal{N}(I)}).
\end{equation}
These last two inequalities imply inequality \eqref{eq:4}. The
remainder of this section is devoted to proving~\eqref{eq:5} and
\eqref{eq:6}.

\subsection{Area to volume ratio of a spherical simplicial cone}
\label{subsec:area-volume-ratio}

We will first derive inequality~\eqref{eq:5}.

\begin{lemma}
  \label{lemma:UpperBoundOnDockingSurface}
  Let $v$ be a vertex of $P$. One has
  \[ \frac{D(S_v)}{\vol(S_v)} \leq \Delta^2 n^3. \]
\end{lemma}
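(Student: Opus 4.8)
The goal is to bound $D(S_v)/\vol(S_v)$, where $S_v = C_v \cap B_n$ and $C_v$ is the simplicial cone generated by the $n$ linearly independent rows $a_{i_1},\ldots,a_{i_n}$ of $A$ tight at $v$. Using the identities \eqref{eq:cones-formulas}, namely $\vol(S_v) = B(S_v)/n$ and $D(S_v) = L(S_v)/(n-1)$, it suffices to bound $L(S_v)/B(S_v)$. Here $B(S_v)$ is the spherical area of the base of $C_v$, and $L(S_v)$ is the total $(n-2)$-dimensional measure of the relative boundary of that base, which decomposes into $n$ spherical facets: the $k$-th facet $F_k$ is the base of the subcone spanned by the $n-1$ generators $\{a_{i_j} : j \neq k\}$. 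So I plan to show
\[
  \frac{D(S_v)}{\vol(S_v)} = \frac{n}{n-1}\cdot\frac{L(S_v)}{B(S_v)} = \frac{n}{n-1}\cdot\frac{\sum_{k=1}^{n}\op{area}(F_k)}{B(S_v)} \leq \Delta^2 n^3,
\]
which reduces to showing $\op{area}(F_k)/B(S_v) \leq \frac{n-1}{n}\Delta^2 n^2$, i.e. essentially $\op{area}(F_k) \leq \Delta^2 n\, B(S_v)$ for each $k$.

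The key step is to compare the spherical area of one facet $F_k$ with the spherical area of the whole base $B(S_v)$ via a radial projection. Fix $k$ and consider the map that projects $F_k$ onto the base of $C_v$ from within the cone — concretely, for a point $x$ on the facet $F_k$, shoot a ray in an appropriate direction (e.g. toward the missing generator $a_{i_k}$, or along a fixed interior direction of $C_v$) and record where it first re-enters the sphere inside $C_v$; this is an injective map from $F_k$ into the base. I would then estimate the Jacobian of this radial/central projection. The distortion of such a projection between two spherical pieces is controlled by (i) the ratio of distances from the origin — harmless on the unit sphere — and (ii) the angle between the two hyperplanes involved, which is where the sub-determinant bound enters: the dihedral angles of the simplicial cone $C_v$ are bounded away from $0$ and $\pi$ in a quantitative way because $\Delta$ controls how "flat" the cone can be. Precisely, the sine of the angle between the facet hyperplane $\op{lin}\{a_{i_j}: j\neq k\}$ and any reference direction is a ratio of two sub-determinants (an $n\times n$ determinant over an $(n-1)\times(n-1)$ Gram-type quantity), hence at least $1/(\Delta^2 \op{poly}(n))$; an $(n-1)$-dimensional area gets stretched by at most the reciprocal of this sine, contributing the $\Delta^2 n^2$-type factor.

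I expect the main obstacle to be making the projection/Jacobian estimate clean and genuinely dimension-uniform: one must choose the projection direction so that it lies in the interior of $C_v$ for every vertex $v$ simultaneously (or argue facet-by-facet), ensure injectivity of the projection onto the base, and bound the Jacobian by a quantity of the form $\Delta^2 \cdot \op{poly}(n)$ without any hidden dependence on the geometry of the particular cone. The determinant manipulations that express the relevant angle as a ratio of sub-determinants of $A$ — so that both numerator and denominator are bounded by $\Delta$ (or by $\sqrt{n}\,\Delta$ for column norms, as in the proof of Theorem~\ref{thr:1}) — are routine but must be set up carefully so the powers of $n$ come out as claimed. Once that comparison $\op{area}(F_k) \leq \Delta^2 n\, B(S_v)$ is in hand, summing over the $n$ facets and multiplying by $n/(n-1)$ yields $D(S_v)/\vol(S_v) \leq \Delta^2 n^3$, completing the proof.
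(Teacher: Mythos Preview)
Your overall decomposition is sound---reduce to a facet-by-facet estimate and control each term through the dihedral geometry of $C_v$, which is in turn governed by sub-determinants of $A$. But the specific mechanism you propose has a dimension mismatch: each spherical facet $F_k$ (the base of the subcone on $n-1$ generators) is $(n-2)$-dimensional, while the base of $S_v$ is $(n-1)$-dimensional. An injective map from $F_k$ into the base therefore lands in a set of $(n-1)$-measure zero, and no Jacobian bound on such a map can give an upper bound on $\op{area}(F_k)/B(S_v)$. What \emph{would} work on the sphere is a cone-type comparison rather than a projection: the geodesic cone in the base over $F_k$ with apex at the missing spherical vertex has $(n-1)$-area at least a constant times $\op{area}(F_k)$ multiplied by its spherical height, and it sits inside the base by convexity---so a lower bound on that height is what is really needed, not a Jacobian of a pointwise map.

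The paper carries out exactly this idea, but in Euclidean rather than spherical terms, which sidesteps all spherical Jacobian analysis. It works directly with the flat lateral facets $F$ of $S_v$ (so that $D(S_v)=\sum_F\op{area}(F)$) and observes that the convex hull of $F$ with the opposite extreme point $y=a_{i_k}/\|a_{i_k}\|$ is contained in the convex set $S_v$; hence $\vol(S_v)\geq\op{area}(F)\cdot h_F/n$, where $h_F$ is the Euclidean distance from $y$ to the hyperplane $H=\op{lin}\{a_{i_j}:j\neq k\}$. Summing gives $D(S_v)/\vol(S_v)\leq n\sum_F 1/h_F$. For the height one has $h_F=d(a_{i_k},H)/\|a_{i_k}\|$ and $d(a_{i_k},H)=|\langle a_{i_k},b\rangle|/\|b\|$, where $b$ is the corresponding row of the adjugate of $A_v$; since $\langle a_{i_k},b\rangle$ is a nonzero integer while $\|a_{i_k}\|,\|b\|\leq\sqrt{n}\,\Delta$, this yields $h_F\geq 1/(n\Delta^2)$. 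With $n$ facets, $D(S_v)/\vol(S_v)\leq n\cdot n\cdot n\Delta^2=\Delta^2 n^3$. This is the same height/angle estimate you were reaching for, obtained without any projection argument.
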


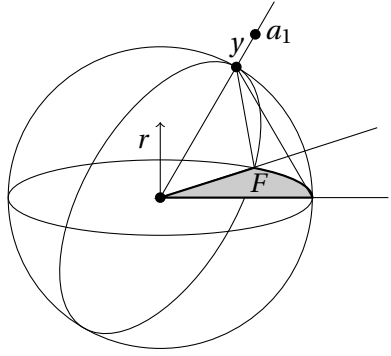
\begin{figure}
\centering
  \begin{tikzpicture}
    \coordinate (o) at (0,0);
    \coordinate (a) at (2,0);
    \coordinate (b) at (18:1.3);
    \coordinate [label=90:$y$] (y) at (60:2);
    \coordinate [label=0:$a_1$] (aj) at (60:2.5);
    \coordinate (r) at (0,1);

    \draw (0,0) circle (2cm);
    \draw[rotate=-30] (0,0) ellipse (1cm and 2cm);
    \draw (0,0) ellipse (2cm and 0.5cm);

    \draw (o) -- (60:3);
    \draw (o) -- (18:3);
    \draw (o) -- (0:3);

    \draw[thick, fill=gray!40] (o) -- (a) arc(0:52:2cm and 0.5cm) --cycle;
    \draw (1.3,0.2) node {$F$};
    \draw (a) -- (y) -- (b);

    \draw[->] (o) -- node[left,near end] {$r$} (r);

    \foreach \pt in {(o),(y),(aj)}
      \fill \pt circle (2pt);
  \end{tikzpicture}
  \caption{Proof of Lemma~\ref{lemma:UpperBoundOnDockingSurface}.}
  \label{fig:upper-bound-docking-surface}
\end{figure}

\begin{proof}
  Let $F$ be a facet of a spherical cone $S_v$. Let $y$ be the vertex
  of $S_v$ not contained in $F$.  Let $Q$ denote the convex hull of
  $F$ and $y$ (see Figure \ref{fig:upper-bound-docking-surface}).  We
  have $Q\subseteq S_v$ because $S_v$ is convex.  Moreover, if $h_F$ is the
  Euclidean distance of $y$ from the hyperplane containing $F$, then
  \[
    \vol(S_v) \geq \vol(Q) = \frac{\operatorname{area}(F)\cdot h_F}{n}.
  \]
  Summing over the facets of $S_v$, we find
  \begin{equation}
    \label{eq:docking-over-volume-first-step}
    \frac{D(S_v)}{\vol(S_v)} = \sum_{\text{facet } F} \frac{\operatorname{area}(F)}{\vol(S_v)} \leq n \cdot \sum_{\text{facet } F} \frac{1}{h_F}.
  \end{equation}
  It remains to provide a lower bound on $h_F$.
  Let $a_1, \ldots, a_n$ be the row-vectors of $A$ defining the extreme
  rays of the normal cone of $v$,  and let $A_v$ be the non-singular
  matrix whose rows are $a_1, \ldots, a_n$. Furthermore, suppose that
  the vertex $y$ lies on the ray generated by $a_1$.
  Let $H$ be the hyperplane generated by $a_2,\ldots,a_n$.
  The distance $d(y,H)$ of
  $y$ to $H$ is equal to $d(a_1,H) / \|a_1\|$.
   Let
  $b_1,\ldots,b_n$ be the columns of the adjugate of
  $A_v$. The column-vector $b_1$ is integral and each component of $b_1$ is
  bounded by $\Delta$. Furthermore $b_1$ is orthogonal to each of
  $a_2,\ldots,a_n$. Thus $d(a_1,H)$ is the length of the projection of
  $a_1$ to $b_1$, which is $|\langle a_1,b_1\rangle | / \|b_1\|\geq 1/(\sqrt{n} \cdot
  \Delta)$, since $a_1$ and $b_1$ are integral. Thus
  \begin{displaymath}
    h_F=d(y,H) \geq \frac{1}{n \Delta^2}.
  \end{displaymath}
  Plugging this into \eqref{eq:docking-over-volume-first-step} completes the proof.
\end{proof}

\subsection{An isoperimetric inequality for spherical cones}
\label{subsec:isoperimetric-union-cones}

We now derive the lower bound~\eqref{eq:6} on the area to volume ratio
for a general spherical cone. To do that, we assume
that the spherical cone has the least favorable shape for the area to
volume ratio and derive the inequality for cones of this shape.  Here
one uses classical isoperimetric inequalities. The basic isoperimetric
inequality states that the measurable subset of $\setR^n$ with a
prescribed volume and minimal area is the ball of this volume. In this
paper, we
need Lévy's isoperimetric inequality, see
e.g.~\cite[Theorem~2.1]{figiel1977dimension}, which can be seen as an analogous
result for spheres:
it states that a measurable
subset of the sphere of prescribed area and minimal boundary
is a spherical cap.


A spherical cone $S$ is a \textit{cone of revolution} if there exist a
vector $v$ and an angle $0<\theta\leq\pi/2$ such that $S$ is the set of
vectors in the unit ball that form an angle of at most $\theta$ with $v$:
\[S=\left\{x\in B_n:\frac{v^Tx}{\|v\| \|x\|}\geq\cos\theta\right\}.\]
Note that a spherical cone is a cone of revolution if and only if its base is a spherical cap. We also observe that two spherical cones of revolution, defined by two different vectors but by the same angle, are always congruent. Therefore, in the following we will only specify the angle of a cone of revolution.

\begin{lemma}
\label{lem:isoperim-cones}
The spherical cone of given volume with minimum lateral surface is a
cone of revolution.
\end{lemma}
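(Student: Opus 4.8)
The plan is to transfer the problem from the spherical cone $S$ to its base, which is a measurable subset of the unit sphere $\partial B_n$, and then invoke L\'evy's isoperimetric inequality. The key observation is that a spherical cone is completely determined by its base: every measurable subset $M$ of $\partial B_n$ is the base of exactly one spherical cone, namely the cone generated by $M$ intersected with $B_n$. Under this correspondence the identities in~\eqref{eq:cones-formulas} give $\vol(S)=B(S)/n$ and $D(S)=L(S)/(n-1)$, where $B(S)$ is the $(n-1)$-dimensional area of the base and $L(S)$ the $(n-2)$-dimensional measure of its relative boundary.

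First, fix the volume $\vol(S)=V$. By the first identity in~\eqref{eq:cones-formulas} this is equivalent to fixing the area $B(S)=nV$ of the base. Minimizing the lateral (dockable) surface $D(S)$ over all spherical cones of volume $V$ is then, by the second identity in~\eqref{eq:cones-formulas}, equivalent to minimizing $L(S)$, i.e.\ to finding, among all measurable subsets of $\partial B_n$ of prescribed area, one whose relative boundary has minimal length.

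Second, apply L\'evy's isoperimetric inequality (see e.g.~\cite{figiel1977dimension}): among all measurable subsets of the sphere of a given area, the spherical cap has boundary of minimal length. Hence the base of a volume-minimizing cone can be taken to be a spherical cap. By the remark preceding the statement, a spherical cone whose base is a spherical cap is precisely a cone of revolution, and two cones of revolution with the same angle are congruent, so the angle is determined by $V$. This proves the lemma.

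The only point requiring a word of care is the passage between the cone and its base: one must check that the dockable surface of $S$ --- the part of $\partial S$ not lying on the sphere --- is exactly the radial cone over the relative boundary of the base, so that the second relation in~\eqref{eq:cones-formulas} genuinely identifies $D(S)$ with $L(S)/(n-1)$; but this is exactly the content of~\eqref{eq:cones-formulas}, which we may assume, so there is no real obstacle. If one prefers to avoid any separate existence argument for the minimizer, the reasoning can instead be phrased as the inequality $D(S)\geq D(S^\ast)$, where $S^\ast$ is the cone of revolution with $\vol(S^\ast)=\vol(S)$, which follows directly from L\'evy's inequality without any compactness considerations.
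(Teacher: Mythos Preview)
Your proof is correct and follows essentially the same approach as the paper: both reduce the problem to the base via the identities~\eqref{eq:cones-formulas}, turning ``minimize $D(S)$ subject to fixed $\vol(S)$'' into ``minimize $L(S)$ subject to fixed $B(S)$,'' and then invoke L\'evy's isoperimetric inequality to conclude that the optimal base is a spherical cap, hence the optimal cone is a cone of revolution. Your additional remarks on the bijection between cones and bases and on phrasing the conclusion as the inequality $D(S)\geq D(S^\ast)$ are sound but not needed beyond what the paper already assumes.
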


\begin{proof}
  By the first equation of \eqref{eq:cones-formulas}, every spherical
  cone of volume $V$ intersects the unit sphere in a surface of area
  $nV$.  Furthermore, by the second equation of
  \eqref{eq:cones-formulas}, the length of the boundary of this
  surface is proportional to the area of the lateral surface of the
  cone. Then the problem of finding the spherical cone of volume $V$
  with the minimum lateral surface can be rephrased as follows: Find a
  surface of area $nV$ on the unit sphere having the boundary of
  minimum length. By Lévy's isoperimetric inequality for spheres,
  the optimal shape for such a surface is a spherical cap. As observed
  above, this corresponds to a cone of revolution.
\end{proof}




\begin{lemma}
\label{lemma:LowerBoundOnSphericalCone}
Let $S$ be a spherical cone of revolution of angle $0<\theta\leq\pi/2$. Then $$\frac{D(S)}{\vol(S)}\geq\sqrt{\frac{2n}{\pi}}.$$
\end{lemma}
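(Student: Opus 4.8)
The plan is to reduce the inequality to a one-dimensional integral comparison and then bound that ratio using elementary calculus. Since $S$ is a cone of revolution of angle $\theta$, both $\vol(S)$ and $D(S)$ can be written as integrals over the "latitude" angle $\varphi \in [0,\theta]$ measured from the axis $v$. Slicing the unit ball by the cones of revolution of angle $\varphi$, the volume element is $\vol(S) = c_{n-1}\int_0^\theta (\sin\varphi)^{n-1}\,\frac{d}{?}\dots$ — more precisely, writing $\omega_{n-2}$ for the surface area of the unit $(n-2)$-sphere, one has $B(S) = \omega_{n-2}\int_0^\theta (\sin\varphi)^{n-2}\,d\varphi$ for the base area, and the relative boundary of the base is the $(n-2)$-sphere of radius $\sin\theta$, so $L(S) = \omega_{n-2}(\sin\theta)^{n-2}$. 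Combining with the identities \eqref{eq:cones-formulas}, namely $\vol(S) = B(S)/n$ and $D(S) = L(S)/(n-1)$, the ratio becomes
\[
  \frac{D(S)}{\vol(S)} = \frac{n}{n-1}\cdot\frac{(\sin\theta)^{n-2}}{\int_0^\theta (\sin\varphi)^{n-2}\,d\varphi}.
\]
So the entire lemma comes down to showing $\dfrac{(\sin\theta)^{n-2}}{\int_0^\theta (\sin\varphi)^{n-2}\,d\varphi} \geq \dfrac{n-1}{n}\sqrt{\dfrac{2n}{\pi}}$ for all $0<\theta\leq\pi/2$.

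Next I would analyze the function $g(\theta) := (\sin\theta)^{n-2}\big/\int_0^\theta(\sin\varphi)^{n-2}\,d\varphi$. The key observation is that $g$ is \emph{decreasing} on $(0,\pi/2]$: its numerator $(\sin\theta)^{n-2}$ is increasing there while the denominator is increasing, but one checks via the quotient rule (or by noting $1/g(\theta) = \int_0^\theta (\sin\varphi/\sin\theta)^{n-2}\,d\varphi$ and that each integrand ratio increases with $\theta$) that $1/g$ is increasing, hence $g$ attains its minimum over $(0,\pi/2]$ at $\theta=\pi/2$. At that point $\sin\theta = 1$ and the denominator is the Wallis integral $W_{n-2} := \int_0^{\pi/2}(\sin\varphi)^{n-2}\,d\varphi$, so it remains to prove $g(\pi/2) = 1/W_{n-2} \geq \frac{n-1}{n}\sqrt{\frac{2n}{\pi}}$, i.e. $W_{n-2} \leq \frac{n}{n-1}\sqrt{\frac{\pi}{2n}}$.

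To finish, I would invoke the standard Wallis bound $W_k \leq \sqrt{\pi/(2(k+1))}$, which follows from the log-convexity of the Gamma function (or directly from $W_{k-1}W_k = \pi/(2(k+1))$ together with $W_{k-1} \geq W_k$). Applying this with $k = n-2$ gives $W_{n-2} \leq \sqrt{\pi/(2(n-1))}$, and then one verifies the elementary inequality $\sqrt{\pi/(2(n-1))} \leq \frac{n}{n-1}\sqrt{\pi/(2n)}$, which is equivalent to $n^2 \geq n(n-1)$ — clearly true. Chaining these estimates yields $D(S)/\vol(S) \geq \sqrt{2n/\pi}$ as claimed.

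The main obstacle I anticipate is getting the integral slicing formulas exactly right — in particular keeping careful track of the normalizing sphere-surface constants $\omega_{n-2}$ and confirming that they cancel cleanly between $B(S)$ and $L(S)$ — and then proving the monotonicity of $g$ rigorously rather than just heuristically; the Wallis estimate itself is classical and should cost only a line or two. An alternative that sidesteps the monotonicity argument would be to prove the pointwise inequality $(\sin\theta)^{n-2} \geq \frac{n-1}{n}\sqrt{\frac{2n}{\pi}}\int_0^\theta(\sin\varphi)^{n-2}\,d\varphi$ directly by differentiating both sides in $\theta$ and comparing at $\theta\to 0$, but reducing to the $\theta=\pi/2$ case via monotonicity seems cleaner.
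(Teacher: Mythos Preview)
Your overall plan coincides with the paper's: reduce to the half-ball $\theta=\pi/2$ and then estimate that extremal case. The paper carries out the reduction geometrically (bounding $B(S)$ above by the area of the half-sphere through the boundary circle), whereas you argue that $g(\theta)=(\sin\theta)^{n-2}\big/\int_0^\theta(\sin\varphi)^{n-2}\,d\varphi$ is decreasing. Your parenthetical justification for the monotonicity is not correct as written---for fixed $\varphi$ the ratio $\sin\varphi/\sin\theta$ \emph{decreases} in $\theta$ on $(0,\pi/2]$, so the monotonicity of $1/g$ does not follow from that remark alone---but the claim itself is true and can be checked by differentiating: one needs $(\sin\theta)^{n-1}\geq (n-2)\cos\theta\int_0^\theta(\sin\varphi)^{n-2}\,d\varphi$, and the difference vanishes at $0$ with nonnegative derivative on $[0,\pi/2]$.

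The genuine gap is in your final estimate. The Wallis bound you invoke, $W_k\leq\sqrt{\pi/(2(k+1))}$, is false; it is the \emph{lower} bound that comes out of $W_kW_{k+1}=\pi/(2(k+1))$ together with $W_k\geq W_{k+1}$. The correct easy upper bound is $W_k\leq\sqrt{\pi/(2k)}$ (from $W_{k-1}W_k=\pi/(2k)$ and $W_{k-1}\geq W_k$), and this is \emph{not} strong enough: plugging $W_{n-2}\leq\sqrt{\pi/(2(n-2))}$ into your target $W_{n-2}\leq\frac{n}{n-1}\sqrt{\pi/(2n)}$ reduces to $(n-1)^2\leq n(n-2)$, i.e.\ $1\leq 0$. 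So the chain of inequalities breaks exactly at the last step. The paper closes this case by writing $L(S)/B(S)$ in terms of Gamma functions and using the sharper ratio bound $\Gamma(x+\tfrac12)/\Gamma(x)\geq\sqrt{x-\tfrac14}$, which (after the identification $W_{n-2}=\tfrac{\sqrt{\pi}}{2}\,\Gamma(\tfrac{n-1}{2})/\Gamma(\tfrac{n}{2})$) is precisely the refinement you are missing; the crude monotonicity $W_{k-1}\geq W_k$ alone cannot deliver it.
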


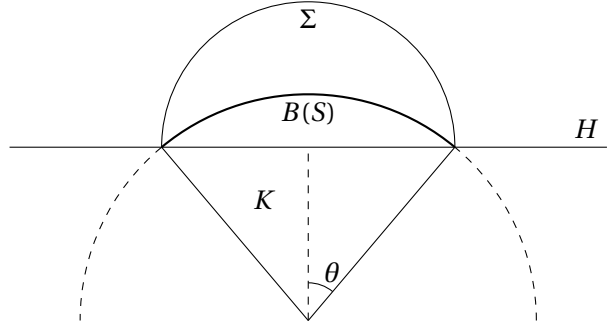
\begin{figure}
\centering
  \begin{tikzpicture}
    \draw[dashed] (0:3cm) arc (0:50:3cm);
    \draw[dashed] (130:3cm) arc (130:180:3cm);
    \draw (0,0) -- (50:3cm);
    \draw (130:3cm) -- (0,0);
    \draw[thick] (50:3cm) arc (50:130:3cm);
    \draw (0,2.75) node {$B(S)$};
    \draw (110:1.7cm) node {$K$};

    \draw (50:3cm) -- +(2,0) node[above left] {$H$};
    \draw (50:3cm) -- (130:3cm) -- +(-2,0);

    \draw[dashed] (0,0) -- (0,2.2981);
    \draw (50:0.5cm) node[above] {$\theta$} arc (50:90:0.5cm);

    \draw (50:3cm) arc (0:180:1.9284cm);
    \draw (0,4) node {$\mathbf{\Sigma}$};
  \end{tikzpicture}
  \caption{Proof of Lemma~\ref{lemma:LowerBoundOnSphericalCone}.}
  \label{fig:spherical-cone}
\end{figure}

\begin{proof}
  Using~\eqref{eq:cones-formulas}, we have to show that
  \begin{equation}
  \label{eq:9}
  \frac{L(S)}{B(S)}\geq\sqrt{\frac{2}{\pi}}\frac{n-1}{\sqrt{n}}.
\end{equation}

This is done in two steps. We first prove that this ratio is minimal
for $S$ being the half-ball, i.e., $\theta = \pi/2$. Then we show that
$\frac{L(S)}{B(S)}\geq\sqrt{\frac{2}{\pi}}\frac{n-1}{\sqrt{n}}$ holds for
the half-ball.

Let $H$ be the hyperplane containing the boundary of the base of $S$.
Then $H$ divides $S$ into two parts: a truncated cone $K$ and the
convex hull of a spherical cap. The radius $r$ of the base of $K$ is
bounded by one.

Consider now the half-ball that contains $B(S)$ and that has $H \cap
B_n$ as its flat-surface, see Figure~\ref{fig:spherical-cone}, and let
$\Sigma$ denote the area of the corresponding half-sphere.  One has
$B(S)\leq\Sigma$ and thus
\begin{displaymath}
  \frac{L(S)}{B(S)} \geq \frac{L(S)}{\Sigma}.
\end{displaymath}

Now $\Sigma$ and $L(S)$ are the surface of an $(n-1)$-dimensional
half-sphere of radius $r$ and the length of its boundary respectively.
If we scale this half-sphere by a factor of $1/r$, we obtain the unit
half-ball and its length respectively.  Since scaling by a factor of
$1/r$ increases areas by a factor of $1/r^{n-1}$ and lengths by a
factor of $1/r^{n-2}$, we have that $\frac{L(S)}\Sigma$ is at least the
length of the unit-half-ball divided by the area of the base of the
half-ball.

Suppose now that $S$ is the half-unit-ball.  We show that the inequality
$L(S)/B(S)\geq$ $\sqrt{\frac{2}{\pi}}\frac{n-1}{\sqrt{n}}$ holds.
The base of $S$ is a half unit sphere and $L(S)$ is the length of the
boundary of a unit ball of dimension $n-1$. Thus
$$B(S)=\frac{n}{2}\frac{\pi^{n/2}}{\Gamma\left(\frac n2+1\right)}, \quad L(S)=\frac{(n-1)\pi^{(n-1)/2}}{\Gamma\left(\frac {n-1}2+1\right)},$$
where $\Gamma$ is the well-known Gamma function.  Using the fact that
$\Gamma(x+1/2)/\Gamma(x)\geq\sqrt{x-\frac 14}$ for all $x>\frac 14$ (see, e.g.,
\cite{merkle1996logarithmic}), one easily verifies that
$$\Gamma\left(\frac n2+1\right)\geq\sqrt{\frac n2}\cdot\Gamma\left(\frac{n-1}2+1\right).$$
It follows that
$$\frac{L(S)}{B(S)}=\frac2{\sqrt\pi}\frac{n-1}{n}\frac{\Gamma\left(\frac
    n2+1\right)}{\Gamma\left(\frac
    {n-1}2+1\right)}\geq\sqrt{\frac2\pi}\cdot\frac{n-1}{\sqrt{n}}.$$
\end{proof}

Finally we are now ready to consider the case of an arbitrary
spherical cone.
\begin{lemma}
\label{lemma:LowerBoundOnDockingSurface}
Let $S$ be a (not necessarily convex) spherical cone with $\vol(S)\leq
\frac12 \vol(B_n)$. Then
$$\frac {D(S)}{\vol(S)}\geq \sqrt{\frac{2n}{\pi}}.$$
\end{lemma}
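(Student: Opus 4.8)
The plan is to reduce the general spherical cone to a cone of revolution, and then invoke Lemma~\ref{lemma:LowerBoundOnSphericalCone}. First I would argue that among all spherical cones of a fixed volume $V\le\frac12\vol(B_n)$, the one minimizing the dockable surface $D(S)$ is a cone of revolution; this is exactly the content of Lemma~\ref{lem:isoperim-cones}, which says the minimum-lateral-surface spherical cone of volume $V$ is a cone of revolution. So it suffices to establish the ratio bound $D(S)/\vol(S)\ge\sqrt{2n/\pi}$ for cones of revolution.

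For a cone of revolution the angle $\theta$ satisfies $0<\theta\le\pi/2$ precisely because $\vol(S)\le\frac12\vol(B_n)$: a cone of revolution of angle $\theta$ has volume monotone in $\theta$, and $\theta=\pi/2$ gives exactly the half-ball, whose volume is $\frac12\vol(B_n)$. Hence the volume constraint forces $\theta\le\pi/2$, which is the hypothesis under which Lemma~\ref{lemma:LowerBoundOnSphericalCone} was proved. Applying that lemma gives $D(S)/\vol(S)\ge\sqrt{2n/\pi}$ for every cone of revolution meeting our volume bound.

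Putting the two steps together: given an arbitrary spherical cone $S$ with $\vol(S)\le\frac12\vol(B_n)$, let $S'$ be the cone of revolution with $\vol(S')=\vol(S)$. By the previous paragraph $\theta(S')\le\pi/2$ and so $D(S')\ge\sqrt{2n/\pi}\cdot\vol(S')$. By Lemma~\ref{lem:isoperim-cones}, $D(S)\ge D(S')$ since $S'$ minimizes the lateral (dockable) surface among spherical cones of that volume. Combining, $D(S)\ge D(S')\ge\sqrt{2n/\pi}\cdot\vol(S')=\sqrt{2n/\pi}\cdot\vol(S)$, which is the claim.

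The only delicate point is the interface between Lemma~\ref{lem:isoperim-cones} and Lemma~\ref{lemma:LowerBoundOnSphericalCone}: one must make sure that the cone of revolution with the prescribed volume does indeed have angle at most $\pi/2$, so that Lemma~\ref{lemma:LowerBoundOnSphericalCone} applies — but this follows immediately from the monotonicity of the volume of a cone of revolution in its angle together with the fact that the half-ball is the cone of revolution of angle exactly $\pi/2$. No further computation is needed; everything else has been assembled in the preceding lemmas.
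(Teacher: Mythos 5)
Your proof is correct and follows exactly the paper's argument: replace $S$ by the cone of revolution $S^*$ of equal volume, use Lemma~\ref{lem:isoperim-cones} to get $D(S)\geq D(S^*)$, and apply Lemma~\ref{lemma:LowerBoundOnSphericalCone}. Your explicit check that the volume hypothesis forces the angle of $S^*$ to be at most $\pi/2$ is a detail the paper leaves implicit, and it is handled correctly.
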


\begin{proof}
  Let $S^*$ be a spherical cone of revolution with the same volume as
  $S$. By Lemma~\ref{lem:isoperim-cones}, $D(S)\geq D(S^*)$. Now, using
  Lemma~\ref{lemma:LowerBoundOnSphericalCone} one has
  $$\frac{D(S)}{\vol(S)}\geq\frac{D(S^*)}{\vol(S^*)}\geq \sqrt{\frac{2n}{\pi}}.$$
\end{proof}

This was the final step in the proof of
Lemma~\ref{lemma:VolumeOfNeighborhood} and thus we have also proved
Theorem~\ref{thr:1},   our main
result on polytopes. The next section is devoted to unbounded
polyhedra.

\section{The case of an unbounded polyhedron}
\label{sec:unbounded}

If the polyhedron $P$ is unbounded, then the union of the normal cones of all vertices of $P$ forms a proper subset $K'$ of $\mathbb R^n$: namely, $K'$ is the set of objective functions $c$ for which the linear program $\max\{c^Tx:x\in P\}$ has finite optimum.
Similarly, the set $K'\cap B_n$ is a proper subset of $B_n$.
Then, given the union of the spherical cones that have already been discovered by the breadth-first-search (we denote this set by $S$),
we should redefine the dockable surface of $S$ as that part of the lateral surface of $S$ that is shared by some neighboring cones.
In other words, we should exclude the part lying on the boundary of $K'\cap B_n$.
However, this implies that Lemma~\ref{lemma:LowerBoundOnDockingSurface} cannot be immediately applied.

To overcome this difficulty, we make use of the Lovász-Simonovits
inequality, which we now recall. Below we use notation $d(X,Y)$ to
indicate the Euclidean distance between two subsets
$X,Y\subseteq\setR^n$, i.e., $d(X,Y)=\inf\{\norm{x-y}:x\in X,y\in
Y\}$. Also, $[x,y]$ denotes the segment connecting two points
$x,y\in\setR^n$ (see Figure~\ref{fig:lovasz-simonovits}).

\begin{theorem} \cite{lovasz1993random}
\label{theorem:LovaszSimonovitsInequality}
Let $K \subseteq \mathbb{R}^n$ be a convex compact set, $0<\epsilon<1$ and $(K_1,K_2,K_3)$ be a partition of $K$ into three measurable sets such that
\begin{equation}\label{eq:LovaszSimonovitsCondition}
\forall x,y \in K, \quad d([x,y] \cap K_1, [x,y] \cap K_2) \geq \epsilon\cdot \norm{x-y}.
\end{equation}
Then
\[\vol(K_3) \geq \frac{2\epsilon}{1-\epsilon} \min\left(\vol(K_1),\vol(K_2)\right).\]
\end{theorem}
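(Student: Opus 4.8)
The plan is to reduce the inequality to a one‑dimensional statement via the \emph{localization lemma} of Lov\'asz and Simonovits (also contained in \cite{lovasz1993random}), arguing by contradiction. Suppose the conclusion fails, say with $\vol(K_1)\leq\vol(K_2)$, so that $\vol(K_3)<\tfrac{2\epsilon}{1-\epsilon}\vol(K_1)$ and $\vol(K_3)<\tfrac{2\epsilon}{1-\epsilon}\vol(K_2)$. Put $\lambda=\tfrac{1-\epsilon}{2\epsilon}$ and let $g=\mathbf 1_{K_1}-\lambda\,\mathbf 1_{K_3}$ and $h=\mathbf 1_{K_2}-\lambda\,\mathbf 1_{K_3}$, viewed as integrable functions on $\setR^n$ supported on $K$; the assumed failure is exactly the statement that $\int_{\setR^n}g>0$ and $\int_{\setR^n}h>0$ (after a routine mollification making $g,h$ lower semicontinuous). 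The localization lemma then produces points $x,y\in K$ and an affine function $\ell\colon[0,1]\to\setR_{\geq0}$ such that, writing $p(t)=(1-t)x+ty$ and $w(t)=\ell(t)^{n-1}$,
\[
\int_0^1 w(t)\,g(p(t))\,dt>0 \qquad\text{and}\qquad \int_0^1 w(t)\,h(p(t))\,dt>0 .
\]

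Next I would pull the partition back along this needle. Set $J_i=\{t\in[0,1]:p(t)\in K_i\}$; these partition $[0,1]$, and since $p$ multiplies Euclidean distances by $\|x-y\|$, applying hypothesis~\eqref{eq:LovaszSimonovitsCondition} to the pair $x,y$ yields $d(J_1,J_2)\geq\epsilon$. Because $g(p(t))=\mathbf 1_{J_1}(t)-\lambda\,\mathbf 1_{J_3}(t)$, the two displayed inequalities say $\int_{J_1}w>\lambda\int_{J_3}w$ and $\int_{J_2}w>\lambda\int_{J_3}w$, that is,
\[
\int_{J_3}w\ <\ \frac{2\epsilon}{1-\epsilon}\,\min\Bigl(\textstyle\int_{J_1}w,\ \int_{J_2}w\Bigr).
\]
So everything comes down to the one‑dimensional version of the theorem: for a weight $w=\ell^{n-1}$ with $\ell$ affine and nonnegative, and any measurable partition $(J_1,J_2,J_3)$ of $[0,1]$ with $d(J_1,J_2)\geq\epsilon$, one has $\int_{J_3}w\geq\tfrac{2\epsilon}{1-\epsilon}\min(\int_{J_1}w,\int_{J_2}w)$ — which contradicts the line above.

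For this one‑dimensional inequality I would first reduce, by a rearrangement, to the case where $J_1=[0,a]$, $J_3=[a,b]$, $J_2=[b,1]$ are consecutive intervals with $\delta:=b-a\geq\epsilon$: enlarging $J_1,J_2$ to $[0,\sup J_1]$ and $[\inf J_2,1]$ only increases $\int_{J_1}w,\int_{J_2}w$ and decreases $\int_{J_3}w$, and the separation $d(J_1,J_2)\geq\epsilon$ forces the middle gap $[a,b]\subseteq J_3$ to satisfy $b-a\geq\epsilon$. The reflection $t\mapsto 1-t$ lets me also assume $a\leq 1-b$, hence $a\leq\tfrac{1-\delta}{2}$. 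If $\ell$ is nondecreasing the minimum is $\int_0^a\ell^{n-1}$, and comparing the averages of $\ell^{n-1}$ over $[0,a]$ and over $[a,b]$ gives $\int_0^a\ell^{n-1}\leq\tfrac a\delta\int_a^b\ell^{n-1}$, whence $\int_a^b\ell^{n-1}\geq\tfrac\delta a\int_0^a\ell^{n-1}\geq\tfrac{2\delta}{1-\delta}\int_0^a\ell^{n-1}\geq\tfrac{2\epsilon}{1-\epsilon}\int_0^a\ell^{n-1}$; the case $\ell$ nonincreasing is treated by the same method, using the closed form $\int_c^d\ell^{n-1}=(\ell(d)^n-\ell(c)^n)/(n\ell')$ to bound the ratio when the crude monotonicity estimate is not enough. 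The extremal instance throughout is $\ell$ constant, where the claim collapses to the elementary bound $2\min(a,1-b)\leq a+(1-b)=1-\delta$.

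If one wants a self‑contained treatment, the real work is the localization lemma itself, proved by induction on $n$ through repeated bisection of $K$ by hyperplanes through a carefully chosen point so as to push the two positive‑integral conditions down onto a segment; I would be content to cite it. Granting it, the only genuinely delicate step is the rearrangement in the one‑dimensional reduction: one must check that an interleaving of $J_1$ and $J_2$ along the needle is never the worst case, which holds because any such interleaving is forced by the separation hypothesis to spend additional $w$‑mass on $J_3$. Everything after that point is one‑variable calculus.
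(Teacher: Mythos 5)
The paper does not prove this theorem: it is imported verbatim from Lov\'asz and Simonovits \cite{lovasz1993random}, so there is no internal proof to compare against, and your outline is in fact the standard proof architecture from that source (localization lemma plus a one-dimensional weighted inequality). The passage from $\setR^n$ to the needle, the pull-back of the separation condition to $d(J_1,J_2)\geq\epsilon$, and the analysis of the case of three consecutive intervals are essentially correct.

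The genuine gap is exactly where you flag it, and it is more than ``one-variable calculus.'' The rearrangement ``enlarge $J_1,J_2$ to $[0,\sup J_1]$ and $[\inf J_2,1]$'' is only meaningful when $\sup J_1\leq\inf J_2$; the separation hypothesis does \emph{not} force this, e.g.\ $J_1\supseteq[0,0.1]\cup[0.5,0.6]$ and $J_2\supseteq[0.25,0.35]\cup[0.75,0.85]$ satisfy $d(J_1,J_2)\geq 0.15$. For such interleaved configurations the two enlarged intervals overlap, no residual $J_3$ remains, and the claimed monotonicity is vacuous. Your closing remark that interleaving ``is never the worst case because it spends additional $w$-mass on $J_3$'' is precisely the statement to be proved, not an argument: each alternation of $J_1$- and $J_2$-blocks does contribute a gap of length at least $\epsilon$ to $J_3$, but the interval lemma compares the $\mu$-mass of one such gap only with the masses of everything to its left and right, and the pieces of $J_1$ (say) are scattered on both sides; combining the per-gap bounds into $\mu(J_3)\geq\frac{2\epsilon}{1-\epsilon}\min(\mu(J_1),\mu(J_2))$ requires an actual induction on the components of inner-approximating compact (or open) versions of $J_1,J_2$, or a level-set argument with the $\tfrac1\epsilon$-Lipschitz function $t\mapsto\min(1,d(t,J_1)/\epsilon)$ together with a pointwise form of the interval lemma. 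A secondary issue: ``routine mollification'' of $g,h$ is not compatible with your next step, which uses that $g\circ p$ is literally $\mathbf 1_{J_1}-\lambda\,\mathbf 1_{J_3}$; the correct fix is inner regularity --- replace $K_1,K_2$ by compact subsets capturing all but $\eta$ of their volume, observe that condition~\eqref{eq:LovaszSimonovitsCondition} is inherited by subsets, apply localization to the resulting semicontinuous $g,h$, and let $\eta\to0$ at the end.
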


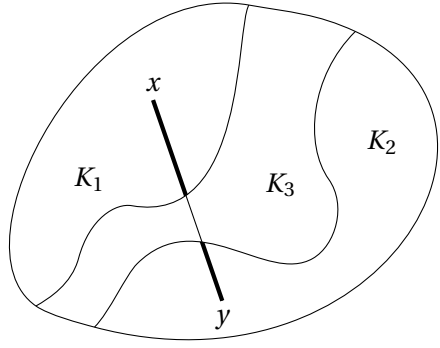
\begin{figure}
\centering
    \begin{tikzpicture}[scale=0.07]
    \coordinate (x) at (32,58);
    \coordinate (y) at (45,20);
    \coordinate (k1) at (20,43);
    \coordinate (k2) at (75,50);
    \coordinate (k3) at (56,42);
    \coordinate (f) at (10,19);
    \coordinate (g) at (18,28);
    \coordinate (h) at (28,38);
    \coordinate (i) at (50,76);
    \coordinate (j) at (70,71);
    \coordinate (k) at (65,43);
    \coordinate (l) at (62,28);
    \coordinate (m) at (29,27);
    \coordinate (n) at (21,15);
    \draw (x) -- (y);
    \draw (f) .. controls +(30:3) and +(255:5) .. (g) .. controls +(75:8) and +(170:3) .. (h) .. controls +(350:22) and +(240:5) .. (i);
    \draw (j) .. controls +(225:12) and +(125:8) .. (k) .. controls +(305:6) and +(30:5) .. (l) .. controls +(210:10) and +(45:16) .. (m) .. controls +(225:3) and +(45:4) .. (n);
    \draw (j) .. controls +(335:40) and +(345:55) .. (n) .. controls +(165:3) and +(325:3) .. (f) .. controls +(145:20) and +(170:30) .. (i) .. controls +(350:8) and +(155:8) .. (j);
    \begin{scope}
      \clip (f) .. controls +(30:3) and +(255:5) .. (g) .. controls +(75:8) and +(170:3) .. (h) .. controls +(350:22) and +(240:5) .. (i) .. controls +(170:30) and +(145:20) .. (f);
      \draw[ultra thick] (x) -- (y);
    \end{scope}
    \begin{scope}
      \clip (j) .. controls +(225:12) and +(125:8) .. (k) .. controls +(305:6) and +(30:5) .. (l) .. controls +(210:10) and +(45:16) .. (m) .. controls +(225:3) and +(45:4) .. (n) .. controls +(345:55) and +(335:40) .. (j);
      \draw[ultra thick] (x) -- (y);
    \end{scope}
    \draw (x) node[above] {$x$};
    \draw (y) node[below] {$y$};
    \draw (k1) node {$K_1$};
    \draw (k2) node {$K_2$};
    \draw (k3) node {$K_3$};
  \end{tikzpicture}
  \caption{Illustration of  the Lovász-Simonovits inequality.}
  \label{fig:lovasz-simonovits}
\end{figure}

We now illustrate how the above result can be used in our context.
Let $K = K'\cap B_n$ and observe that $K$ is a convex and compact set.
Let $S\subseteq K$ be the union of the spherical cones that have already been discovered by the breadth-first-search.
We define the dockable surface of $S$ as that part of the lateral surface of $S$ that is disjoint from the boundary of $K$.
We denote by $D'(S)$ the area of the dockable surface of $S$. We can prove the following analogue of Lemma~\ref{lemma:LowerBoundOnDockingSurface}:

\begin{lemma}
If $\vol(S) \leq \frac12 \vol(K)$, then
$D'(S)\geq \vol(S)$.
\end{lemma}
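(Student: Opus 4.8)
The plan is to apply the Lov\'asz--Simonovits inequality (Theorem~\ref{theorem:LovaszSimonovitsInequality}) to a partition of $K=K'\cap B_n$ in which the middle set $K_3$ is a thin collar around the dockable surface of $S$, and then to let the width of the collar tend to zero. The key preliminary observation is that the dockable surface of $S$ is exactly the relative boundary of $S$ inside $K$: writing $S=C\cap B_n$ with $C=\bigcup_{v\in I}C_v$, the boundary of $S$ consists of the \emph{base} $C\cap\partial B_n$, which lies on $\partial B_n\subseteq\partial K$, and the \emph{lateral surface} $\partial C\cap B_n$, whose non-dockable part is precisely the part lying on $\partial K'\subseteq\partial K$. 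Hence, setting $D\coloneqq\partial S\cap\operatorname{int}(K)$, we have $D'(S)=\operatorname{area}(D)$, and $D$ is a finite union of $(n-1)$-dimensional polytopes, namely facets of the cones $C_v$ intersected with $B_n$.

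Fix a small $\delta\in(0,1)$, and let $E\subseteq B_n$ denote the union of all faces of dimension at most $n-2$ of the cones $C_v$ together with $\partial C\cap\partial B_n$; this is a finite union of sets of dimension $\leq n-2$. Put
\[
 K_3=\{x\in K:\ d(x,D\cup E)\leq\delta\},\qquad K_1=S\setminus K_3,\qquad K_2=K\setminus(S\cup K_3),
\]
so that $(K_1,K_2,K_3)$ is a partition of $K$. I would then check that it satisfies hypothesis~\eqref{eq:LovaszSimonovitsCondition} with $\epsilon=\delta$. Given $x,y\in K$ and points $p\in[x,y]\cap K_1$, $q\in[x,y]\cap K_2$, the chord $[p,q]\subseteq K$ joins a point of $S$ to a point outside $S$, hence leaves $S$ at some $z\in\partial S$; by the observation above $z\in D$ if $z\in\operatorname{int}(K)$, while if $z\in\partial K$ a short analysis of the degenerate cases (a chord of $B_n$ meets the sphere only at its endpoints, and a chord of the convex set $K$ passing through a relative-boundary point of a cone $C_v$ must either enter that cone's interior or run along a lower face) forces $z\in E$. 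In all cases $z\in D\cup E$, so $\abs{p-z}>\delta$ and $\abs{q-z}>\delta$, and since $z$ lies between $p$ and $q$ we get $\abs{p-q}>2\delta\geq\delta\operatorname{diam}(K)\geq\delta\norm{x-y}$, using $K\subseteq B_n$.

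Theorem~\ref{theorem:LovaszSimonovitsInequality} then gives $\vol(K_3)\geq\tfrac{2\delta}{1-\delta}\min(\vol(K_1),\vol(K_2))$. On the other hand $K_3$ is contained in the $\delta$-neighbourhood of $D\cup E$; an elementary tube estimate shows that the $\delta$-neighbourhood of the piecewise-flat $(n-1)$-dimensional set $D$ has volume at most $2\delta\,D'(S)+O(\delta^2)$, while the $\delta$-neighbourhood of the at most $(n-2)$-dimensional set $E$ has volume $O(\delta^2)$, so $\vol(K_3)\leq 2\delta\,D'(S)+O(\delta^2)$. Combining the two bounds, dividing by $2\delta$, and letting $\delta\to0$ — at which point $\vol(K_1)\to\vol(S)$ and $\vol(K_2)\to\vol(K\setminus S)=\vol(K)-\vol(S)\geq\vol(S)$, since $D\cup E$ is a null set and $\vol(S)\leq\tfrac12\vol(K)$ — yields $D'(S)\geq\vol(S)$.

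The step I expect to be the real obstacle is the verification of the Lov\'asz--Simonovits hypothesis: one must make precise that a chord inside $K$ running from $S$ to its complement cannot ``slip out'' through a full-dimensional portion of $\partial S$ lying on $\partial K$ that we have not charged to the dockable surface. This is exactly where the geometry of the normal fan of $P$ is used — two adjacent normal cones share a facet whose relative interior lies in $\operatorname{int}(K')$, so any such crossing can only be diverted onto $\partial K$ along lower-dimensional faces, which is why the $(n-2)$-dimensional correction $E$ suffices and contributes only the negligible $O(\delta^2)$ term. The remaining ingredients — Theorem~\ref{theorem:LovaszSimonovitsInequality} itself and the volume estimate for tubular neighbourhoods — are routine.
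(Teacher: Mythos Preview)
Your argument is correct and follows essentially the same route as the paper: partition $K$ using a thin collar $K_3$ around the dockable surface, apply the Lov\'asz--Simonovits inequality, and let the collar width tend to zero so that $\vol(K_3)/(2\delta)\to D'(S)$ while $\vol(K_1)\to\vol(S)$ and $\min(\vol(K_1),\vol(K_2))\to\vol(S)$. The only difference is one of rigor: the paper simply asserts that condition~\eqref{eq:LovaszSimonovitsCondition} holds for the collar $(F+\epsilon B_n)\cap K$, whereas you explicitly add the lower-dimensional correction set $E$ to handle chords that might exit $S$ through $\partial K$, and then observe that this contributes only an $O(\delta^2)$ term --- a refinement that is welcome but does not change the strategy.
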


\begin{proof}
Let $F$ denote the dockable surface of $S$ (thus $D'(S)$ is the area of $F$).
For every $\epsilon>0$ we define
\begin{align*}
K_{3,\epsilon}&=(F+\epsilon B_n)\cap K,\\
K_{1,\epsilon}&=S\setminus K_{3,\epsilon},\\
K_{2,\epsilon}&=K\setminus(K_{1,\epsilon}\cup K_{3,\epsilon}),
\end{align*}
where $X+Y$ denotes the Minkowski sum of two subsets $X,Y\in\setR^n$, i.e., $X+Y=\{x+y:x\in X,y\in Y\}$.
Clearly $(K_{1,\epsilon},K_{2,\epsilon},K_{3,\epsilon})$ is a partition of $K$ into three measurable sets. Furthermore, condition~\eqref{eq:LovaszSimonovitsCondition} is satisfied. Thus Theorem~\ref{theorem:LovaszSimonovitsInequality} implies that
$$\frac{\vol(K_{3,\epsilon})}{2\epsilon}\geq\frac1{1-\epsilon}\min\left(\vol(K_{1,\epsilon}),\vol(K_{2,\epsilon})\right).$$
We observe that
\begin{equation*}
\begin{split}
\vol(K_{2,\epsilon}) &\geq \vol(K\setminus S)-\vol(K_{3,\epsilon})\\
& \geq \vol(S)-\vol(K_{3,\epsilon})\\
&\geq \vol(K_{1,\epsilon})-\vol(K_{3,\epsilon}).
\end{split}
\end{equation*}
Combining those two inequalities, we find
\begin{equation}\label{eq:unbounded-proof}
\frac{\vol(F+\epsilon B_n)}{2\epsilon}\geq\frac{\vol(K_{3,\epsilon})}{2\epsilon}\geq\frac1{1-\epsilon}(\vol(K_{1,\epsilon})-\vol(K_{3,\epsilon})).
\end{equation}
By a well-known result in geometry (see, e.g., \cite{Federer},) as $\epsilon$ tends to 0 the left-hand side of \eqref{eq:unbounded-proof} tends to the area of $F$,
which is precisely the dockable surface $D'(S)$.
Moreover, as $\epsilon$ tends to $0$, $\vol(K_{3,\epsilon})$ tends to $0$ and $\vol(K_{1,\epsilon})$ tends to $\vol(S)$.
We conclude that $D'(S) \geq \vol(S)$.
\end{proof}

Following the same approach as that used for the case of a polytope, one can show the following result for polyhedra.
\begin{theorem}
Let $P=\{x \in \setR^n \colon Ax\leq b\}$ be a polyhedron, where all sub-determinants of $A\in\Z^{m× n}$ are bounded by $\Delta$ in absolute value.
Then the diameter of $P$ is bounded by $O\left(\Delta^2 n^4\log n\Delta\right)$.
In particular, if $A$ is totally unimodular, then the diameter of $P$ is bounded by $O(n^4\log n)$.
\end{theorem}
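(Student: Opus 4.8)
The plan is to run the proof of Theorem~\ref{thr:1} essentially verbatim, the only change being that the volume-expansion estimate of Lemma~\ref{lemma:VolumeOfNeighborhood} is replaced by the weaker estimate that the preceding lemma yields in the unbounded setting; since that estimate loses a factor $\sqrt n$ (the one supplied by Lemma~\ref{lemma:LowerBoundOnDockingSurface} in the bounded case), the exponent of $n$ in the final bound goes up from $3.5$ to $4$.

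First I would dispose of trivialities and set the stage. If $P$ has at most one vertex its diameter is $0$, and if $P$ is a polytope Theorem~\ref{thr:1} already gives a stronger bound, so assume $P$ is unbounded with at least two vertices and, after perturbing $b$ (which can only enlarge the diameter), non-degenerate. Because $A$ has full column rank, the recession cone $\{x\in\setR^n : Ax\le 0\}$ is pointed, hence its polar $K'$ -- which is precisely the union of the normal cones of the vertices of $P$ -- is full-dimensional, so $K:=K'\cap B_n$ is a convex compact set of positive volume. I would then run breadth-first-search in $G_P$ from the two given vertices $u,v$ at the same time: distinct vertices have normal cones with disjoint interiors, so as long as the two discovered vertex sets $I^u_t,I^v_t$ remain disjoint we have $\vol(S_{I^u_t})+\vol(S_{I^v_t})\le\vol(K)$, and at least one of the two discovered spherical cones has volume at most $\tfrac12\vol(K)$. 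As in the polytope case, it follows that the diameter of $P$ is at most twice the largest number $T$ of steps a single breadth-first-search can take while the volume of its discovered spherical cone stays below $\tfrac12\vol(K)$.

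Next comes the volume-expansion step. For a set $I\subseteq V$ with $\vol(S_I)\le\tfrac12\vol(K)$ the preceding lemma gives $D'(S_I)\ge\vol(S_I)$. Every facet making up the dockable surface of $S_I$ lies in the interior of $K$, hence is shared with the normal cone of exactly one adjacent vertex $w\notin I$, i.e.\ $w\in\mathcal{N}(I)$, so $D'(S_I)\le\sum_{w\in\mathcal{N}(I)}D(S_w)$. Applying Lemma~\ref{lemma:UpperBoundOnDockingSurface} to each neighbour and using again that distinct normal cones have disjoint interiors,
\[
\vol(S_{\mathcal{N}(I)})=\sum_{w\in\mathcal{N}(I)}\vol(S_w)\ \ge\ \frac1{\Delta^2 n^3}\sum_{w\in\mathcal{N}(I)}D(S_w)\ \ge\ \frac1{\Delta^2 n^3}\,D'(S_I)\ \ge\ \frac1{\Delta^2 n^3}\,\vol(S_I).
\]
From here the argument is identical to the one in Theorem~\ref{thr:1}: writing $I_j$ for the vertices discovered after $j$ steps of a single search, $I_j\supseteq I_{j-1}\cup\mathcal{N}(I_{j-1})$, so while $\vol(S_{I_{j-1}})\le\tfrac12\vol(K)$ we get $\vol(S_{I_j})\ge\left(1+\tfrac1{\Delta^2 n^3}\right)\vol(S_{I_{j-1}})$ and hence $\vol(S_{I_j})\ge\left(1+\tfrac1{\Delta^2 n^3}\right)^{j}\vol(S_{I_0})$. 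Since $\vol(S_{I_j})\le\vol(B_n)\le 2^n$ and, by the same simplex argument as in Theorem~\ref{thr:1}, $\vol(S_{I_0})=\vol(C_v\cap B_n)\ge 1/(n!\,n^{n/2}\Delta^n)$, taking logarithms and using $\ln(1+x)\ge x/2$ for $0\le x\le 1$ gives $T=O\!\left(\Delta^2 n^3\cdot n\log(n\Delta)\right)=O\!\left(\Delta^2 n^4\log n\Delta\right)$, so the diameter is $O\!\left(\Delta^2 n^4\log n\Delta\right)$, and $\Delta=1$ yields $O(n^4\log n)$ for totally unimodular $A$.

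The genuinely new difficulty, namely the fact that near $\partial K$ the dockable surface must be redefined to exclude the part lying on the boundary of $K$, has already been handled in the preceding lemma through the Lov\'asz--Simonovits inequality, so I expect no real obstacle in the argument above; the care needed is purely in the bookkeeping. One must verify that $K$ is indeed full-dimensional and compact -- this is where the full-column-rank hypothesis is used, via pointedness of the recession cone -- that the redefined dockable surface of $S_I$ is still covered by the \emph{ordinary} dockable surfaces $D(S_w)$ of the neighbouring cones, so that Lemma~\ref{lemma:UpperBoundOnDockingSurface} can be quoted unchanged, and that a single breadth-first-search started at one vertex really does fill more than half of $\vol(K)$ eventually, which follows from connectedness of $G_P$ together with $\bigcup_v C_v=K'$.
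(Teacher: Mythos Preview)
Your proposal is correct and follows exactly the approach the paper intends: the paper itself does not spell out a proof of this theorem but simply states that ``following the same approach as that used for the case of a polytope'' one obtains the result, and you have carried this out in detail, replacing Lemma~\ref{lemma:LowerBoundOnDockingSurface} by the preceding lemma's weaker bound $D'(S)\geq\vol(S)$, combining it with Lemma~\ref{lemma:UpperBoundOnDockingSurface} to get expansion by a factor $1+\tfrac{1}{\Delta^2 n^3}$, and then rerunning the logarithmic iteration count from Theorem~\ref{thr:1}. Your additional remarks on the full-dimensionality of $K$ via pointedness of the recession cone and on why the redefined dockable surface is still covered by the ordinary dockable surfaces of neighbouring cones are careful bookkeeping that the paper leaves implicit.
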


\section{Remarks}

\subsection{Which sub-determinants enter the bound?}

For simplicity, we have assumed that a bound $\Delta$ was given for
the absolute value of all sub-determinants of $A$.  However, our proof
only uses the fact the the sub-determinants of size $1$ (i.e., the
entries of the matrix) and $n-1$ are bounded.  Calling $\Delta_1$
(resp. $\Delta_{n-1}$) the bound on the absolute value of the entries
of $A$ (resp. on the sub-determinants of $A$ of size $n-1$), one
easily verifies that all the results discussed above remain
essentially unchanged, except that the statement of
Lemma~\ref{lemma:UpperBoundOnDockingSurface} becomes
$$\frac{D(S_v)}{\vol(S_v)}\leq\Delta_1\Delta_{n-1}n^3$$
and the lower bound on $\vol(I_0)$ becomes
$$\vol(I_0)\geq \frac1{n!n^{n/2}\Delta_1^n}.$$
This implies the following strengthened result:

\begin{theorem}
  Let $P=\{x \in \setR^n \colon Ax\leq b\}$ be a polyhedron, where the
  entries of $A$ (respectively the sub-de­ter­mi­nants of $A$ of size
  $n-1$) are bounded in absolute value by $\Delta_1$ (respectively
  $\Delta_{n-1}$).  Then the diameter of $P$ is bounded by
  $O\left(\Delta_1\Delta_{n-1} n^4\log n\Delta_1\right)$. Moreover, if
  $P$ is a polytope, its diameter is bounded by
  $O\left(\Delta_1\Delta_{n-1} n^{3.5}\log n\Delta_1\right)$.
\end{theorem}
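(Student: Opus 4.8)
The plan is to retrace the proof of Theorem~\ref{thr:1} and of its unbounded analogue almost verbatim, keeping track of precisely which sub-determinants of $A$ enter each estimate. One finds that every intermediate statement is either purely geometric---and therefore independent of $A$---or depends on $A$ only through the bound $\Delta_1$ on the entries and the bound $\Delta_{n-1}$ on the $(n-1)\times(n-1)$ minors; so the whole argument reduces to substituting $\Delta_1\Delta_{n-1}$ for $\Delta^2$ and $\Delta_1$ for $\Delta$ in the scalar estimates. The only place that deserves a careful look is the appearance of the adjugate of $A_v$ in Lemma~\ref{lemma:UpperBoundOnDockingSurface}.

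First I would redo Lemma~\ref{lemma:UpperBoundOnDockingSurface}. Its proof reduces the claim to a lower bound on the heights $h_F$ via $D(S_v)/\vol(S_v)\leq n\sum_F 1/h_F$. For a facet $F$ with opposite vertex $y$ on the ray of $a_1$ one has $h_F=d(y,H)=d(a_1,H)/\|a_1\|$, and I split this into two factors. For the numerator: the row $b_1$ of the adjugate of $A_v=(a_1,\dots,a_n)$ that is orthogonal to $a_2,\dots,a_n$ is integral, and its components are, up to sign, $(n-1)\times(n-1)$ minors of $A_v$, hence $(n-1)$-minors of $A$; thus $\|b_1\|\leq\sqrt n\,\Delta_{n-1}$, and since $\langle a_1,b_1\rangle=\pm\det A_v$ is a nonzero integer, $d(a_1,H)=|\langle a_1,b_1\rangle|/\|b_1\|\geq 1/(\sqrt n\,\Delta_{n-1})$. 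For the denominator: $\|a_1\|\leq\sqrt n\,\Delta_1$, since the entries of $a_1$ are entries of $A$. Hence $h_F\geq 1/(n\,\Delta_1\Delta_{n-1})$ and $D(S_v)/\vol(S_v)\leq\Delta_1\Delta_{n-1}n^3$, the modified form of \eqref{eq:5} announced in the Remark.

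Next I would observe that Lemmas~\ref{lem:isoperim-cones}, \ref{lemma:LowerBoundOnSphericalCone} and \ref{lemma:LowerBoundOnDockingSurface}, the relations \eqref{eq:cones-formulas}, the Lov\'asz--Simonovits inequality (Theorem~\ref{theorem:LovaszSimonovitsInequality}), and the lemma $D'(S)\geq\vol(S)$ of Section~\ref{sec:unbounded} involve no sub-determinant of $A$ whatsoever, so they stand as they are. Substituting the new \eqref{eq:5} into the chain \eqref{eq:7}--\eqref{eq:8} therefore yields, for a polytope, the volume-expansion bound $\vol(\mathcal N(I))\geq\sqrt{2/\pi}\,(\Delta_1\Delta_{n-1}n^{2.5})^{-1}\vol(I)$ and, for a general polyhedron (where $D'(S)\geq\vol(S)$ replaces $D(S)\geq\sqrt{2n/\pi}\,\vol(S)$, so one loses a factor $n^{1/2}$), the bound $\vol(\mathcal N(I))\geq(\Delta_1\Delta_{n-1}n^{3})^{-1}\vol(I)$. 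I would also repeat the estimate of $\vol(I_0)=\vol(S_v)$ from the proof of Theorem~\ref{thr:1}: the normal cone $C_v$ contains the simplex spanned by $0$ and the tight rows $a_{i_1},\dots,a_{i_n}$, whose volume is at least $1/n!$ by integrality of $A$, and scaling by $1/\max_k\|a_{i_k}\|\geq 1/(\sqrt n\,\Delta_1)$---using now only the bound $\Delta_1$ on entries---places it inside $B_n$, so $\vol(I_0)\geq 1/(n!\,n^{n/2}\Delta_1^{\,n})$ and $\ln(2^n/\vol(I_0))=O(n\log n\Delta_1)$.

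Finally I would feed these two volume-expansion estimates and this lower bound on $\vol(I_0)$ into the breadth-first-search counting argument of Theorem~\ref{thr:1} (using $\ln(1+x)\geq x/2$ for $0\leq x\leq1$, which applies here since $\Delta_1,\Delta_{n-1}\geq1$): the number of iterations needed before the discovered volume exceeds half the ambient volume is $O(\Delta_1\Delta_{n-1}n^{2.5}\cdot n\log n\Delta_1)=O(\Delta_1\Delta_{n-1}n^{3.5}\log n\Delta_1)$ in the polytope case and $O(\Delta_1\Delta_{n-1}n^{3}\cdot n\log n\Delta_1)=O(\Delta_1\Delta_{n-1}n^{4}\log n\Delta_1)$ in general, and the diameter is at most twice this. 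I do not expect a genuine obstacle: the one thing to check with care is that the adjugate rows are really built from $(n-1)$-minors (so that $\Delta_{n-1}$, rather than a larger minor bound, suffices), and that the integrality facts invoked---that $\langle a_1,b_1\rangle$ is a nonzero integer and that the simplex has volume at least $1/n!$---use only that $A$ is integral and no determinant bound at all.
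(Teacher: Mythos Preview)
Your proposal is correct and follows exactly the approach the paper itself sketches in the Remark: you identify that the only places where sub-determinants enter are (i) the adjugate row $b_1$ in Lemma~\ref{lemma:UpperBoundOnDockingSurface}, whose entries are $(n-1)$-minors, and (ii) the norm bounds $\|a_{i_k}\|\leq\sqrt n\,\Delta_1$ used both in that lemma and in the lower bound on $\vol(I_0)$; you then substitute accordingly and propagate through the breadth-first-search argument, recovering the two stated modifications and the final bounds verbatim.
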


\subsection{A more general geometric setting}
\label{sec:geometric-setting}

Since our result was first announced in~\cite{BDEHN12}, Brunsch and
R\"oglin~\cite{brunsch2013finding} have found an algorithm to compute
a short path between two given vertices of a non-degenerate polyhedron
$P = \{ x \in \R^n \colon Ax \leq b\}$ that runs in expected
polynomial time in $n,m$ and $1/\delta$, where $\delta$ is a lower
bound on the sine of the angle of a row of $A$ to the subspace of
$n-1$ other rows of $A$.  The expected length of the path is $O(m n^2
/ \delta^2)$. If $A \in \Z^{m\times n}$, then $\delta \geq 1/(\Delta_1
\Delta_{n-1} n)$, where $\Delta_1$ and $\Delta_{n-1}$ are, as before,
bounds on the absolute values of $1\times1$ and $(n-1)\times(n-1)$
sub-determinants.

Our proof technique applies in this setting as well. We have volume
expansion since the normal cones cannot be too flat. The parameter
$\delta$ is a measure for this flatness. In this setting,
Lemma~\ref{lemma:VolumeOfNeighborhood} reads as follows.

\begin{lemma}
\label{lem:1}
Let $P=\{x \in \setR^n \colon Ax\leq b\}$ be a polytope and let
$I\subseteq V$ be a set of vertices with $\vol(I) \leq (1/2) \cdot
\vol(B_n)$.  Then the volume of the neighborhood of $I$ is at least
$$\vol({\mathcal{N}(I)})\geq \sqrt{\frac{2}{\pi}}\left({\delta}/{n^{1.5}}\right)\cdot \vol(I).$$
\end{lemma}

\noindent
The proof is along the lines of the proof of
Lemma~\ref{lemma:VolumeOfNeighborhood} and by adapting
Lemma~\ref{lemma:UpperBoundOnDockingSurface}. Here one has now
\begin{displaymath}
  D(S_v) / \vol(S_v) \leq n^2 / \delta.
\end{displaymath}
Theorem~\ref{thr:1} is in the geometric setting now becomes the following.

\begin{theorem}
  Let $P=\{x \in \setR^n \colon Ax\leq b\}$ be a polytope where the
  sine of the angle of any row of $A$ to the subspace generated by
  $n-1$ other rows of $A$ is at least $\delta$.  The diameter of $P$
  is bounded by $O\left(n^{2.5}/\delta \cdot \ln (n/\delta) \right)$.
\end{theorem}

\noindent
Again, the proof is along the lines of the proof of
Theorem~\ref{thr:1} where the volume of $S_v$ is now lower bounded by
$\delta^{n-1} / n!$. In fact, the diameter bound
$O\left(n^{2.5}/\delta \cdot \ln (n/\delta) \right)$ holds already for
non-degenerate polytopes where each $S_v$ contains a ball of radius
$\delta$. For polyhedra, we obtain a bound of
\begin{displaymath}
  O\left(n^{3}/\delta  \cdot \ln (n/\delta) \right)
\end{displaymath}
on the diameter.

\subsubsection*{Acknowledgments} 
This work was carried out while all authors were at EPFL (École
Polytechnique Fédérale de Lausanne), Switzerland.  The authors
acknowledge support from the DFG Focus Program 1307 within the project
``Algorithm Engineering for Real-time Scheduling and Routing'' and
from the Swiss National Science Foundation within the project
``Set-par­ti­tion­ing integer programs and integrality gaps''.

{
\small

\begin{thebibliography}{DLKOS09}

\bibitem[Bal84]{MR769400}
M.~L. Balinski.
\newblock The {H}irsch conjecture for dual transportation polyhedra.
\newblock {\em Math. Oper. Res.}, 9(4):629--633, 1984.

\bibitem[Bar74]{MR0355826}
D.~Barnette.
\newblock An upper bound for the diameter of a polytope.
\newblock {\em Discrete Math.}, 10:9--13, 1974.

\bibitem[BDSE{\etalchar{+}}12]{BDEHN12}
N.~Bonifas, M.~Di~Summa, F.~Eisenbrand, N.~H\"{a}hnle, and M.~Niemeier.
\newblock On sub-determinants and the diameter of polyhedra.
\newblock In {\em Proceedings of the 28th annual ACM symposium on Computational
  geometry}, SoCG '12, pages 357--362, 2012.

\bibitem[BR13]{brunsch2013finding}
T.~Brunsch and H.~R{\"o}glin.
\newblock Finding short paths on polytopes by the shadow vertex algorithm.
\newblock In {\em Automata, Languages, and Programming}, pages 279--290.
  Springer, 2013.

\bibitem[BvdHS06]{MR2223631}
G.~Brightwell, J.~van~den Heuvel, and L.~Stougie.
\newblock A linear bound on the diameter of the transportation polytope.
\newblock {\em Combinatorica}, 26(2):133--139, 2006.

\bibitem[DF94]{MR1274170}
M.~Dyer and A.~Frieze.
\newblock Random walks, totally unimodular matrices, and a randomised dual
  simplex algorithm.
\newblock {\em Mathematical Programming}, 64(1, Ser. A):1--16, 1994.

\bibitem[DLKOS09]{MR2568801}
J.~A. De~Loera, E.~D. Kim, S.~Onn, and F.~Santos.
\newblock Graphs of transportation polytopes.
\newblock {\em J. Combin. Theory Ser. A}, 116(8):1306--1325, 2009.

\bibitem[EHRR10]{EHRR10}
F.~Eisenbrand, N.~Hähnle, A.~Razborov, and T.~Rothvo\ss.
\newblock Diameter of polyhedra: {L}imits of abstraction.
\newblock {\em Mathematics of Operations Research}, 35(4):786--794, 2010.

\bibitem[Fed69]{Federer}
H.~Federer.
\newblock {\em Geometric Measure Theory}.
\newblock Springer, 1969.

\bibitem[FLM77]{figiel1977dimension}
T.~Figiel, J.~Lindenstrauss, and V.~Milman.
\newblock The dimension of almost spherical sections of convex bodies.
\newblock {\em Acta Mathematica}, 139(1):53--94, 1977.

\bibitem[Kai04]{MR2077562}
V.~Kaibel.
\newblock On the expansion of graphs of 0/1-polytopes.
\newblock In {\em The sharpest cut}, MPS/SIAM Ser. Optim., pages 199--216.
  SIAM, Philadelphia, PA, 2004.

\bibitem[Kal91]{MR1116365}
G.~Kalai.
\newblock The diameter of graphs of convex polytopes and {$f$}-vector theory.
\newblock In {\em Applied geometry and discrete mathematics}, volume~4 of {\em
  DIMACS Ser. Discrete Math. Theoret. Comput. Sci.}, pages 387--411. Amer.
  Math. Soc., Providence, RI, 1991.

\bibitem[KK92]{MR1130448}
G.~Kalai and D.~J. Kleitman.
\newblock A quasi-polynomial bound for the diameter of graphs of polyhedra.
\newblock {\em Bull. Amer. Math. Soc. (N.S.)}, 26(2):315--316, 1992.

\bibitem[KS10]{MR2681516}
E.~D. Kim and F.~Santos.
\newblock An update on the {H}irsch conjecture.
\newblock {\em Jahresber. Dtsch. Math.-Ver.}, 112(2):73--98, 2010.

\bibitem[KW67]{KleeWalkup}
V.~Klee and D.~W. Walkup.
\newblock The $d$-step conjecture for polyhedra of dimension $d<6$.
\newblock {\em Acta Math. 133}, pages 53--78, 1967.

\bibitem[Lar70]{MR0254735}
D.~G. Larman.
\newblock Paths of polytopes.
\newblock {\em Proc. London Math. Soc. (3)}, 20:161--178, 1970.

\bibitem[LS93]{lovasz1993random}
L.~Lov{\'a}sz and M.~Simonovits.
\newblock Random walks in a convex body and an improved volume algorithm.
\newblock {\em Random structures \& algorithms}, 4(4):359--412, 1993.

\bibitem[Mer96]{merkle1996logarithmic}
M.~Merkle.
\newblock Logarithmic convexity and inequalities for the gamma function.
\newblock {\em Journal of mathematical analysis and applications},
  203(2):369--380, 1996.

\bibitem[Nad89]{Naddef89}
D.~Naddef.
\newblock The {H}irsch conjecture is true for (0,1)-polytopes.
\newblock {\em Mathematical Programming}, 45:109--110, 1989.

\bibitem[Orl97]{MR1486304}
J.~B. Orlin.
\newblock A polynomial time primal network simplex algorithm for minimum cost
  flows.
\newblock {\em Mathematical Programming}, 78(2, Ser. B):109--129, 1997.
\newblock Network optimization: algorithms and applications (San Miniato,
  1993).

\bibitem[San12]{Santos10}
F.~Santos.
\newblock A counterexample to the {H}irsch conjecture.
\newblock {\em Ann. of Math. (2)}, 176(1):383--412, 2012.

\bibitem[Ver09]{MR2529774}
R.~Vershynin.
\newblock Beyond {H}irsch conjecture: walks on random polytopes and smoothed
  complexity of the simplex method.
\newblock {\em SIAM J. Comput.}, 39(2):646--678, 2009.

\end{thebibliography}

\newcommand{\etalchar}[1]{$^{#1}$}

}

\end{document}